\newtheorem{theorem}{Theorem}[section]
\newtheorem{lemma}[theorem]{Lemma}
\newtheorem{corollary}[theorem]{Corollary}
\newtheorem{proposition}[theorem]{Proposition}
\newtheorem{remark}[theorem]{Remark}
\newcommand{\ncom}{\newcommand}
\ncom{\ep}{\epsilon}
\ncom{\rar}{\rightarrow}
\ncom{\thrar}{\twoheadrightarrow}
\ncom{\lrar}{\longrightarrow}
\ncom{\ov}{\overline}
\ncom{\what}{\widehat}
\newcommand{\ignore}[1]{}
\ncom{\m}{\mbox}
\ncom{\sta}{\stackrel}
\ncom{\C}{{\mathbb C}}
\ncom{\A}{{\mathbb A}}
\ncom{\Z}{{\mathbb Z}}
\ncom{\Q}{{\mathbb Q}}
\ncom{\R}{{\mathbb R}}
\ncom{\G}{{\mathbb G}}
\ncom{\HH}{{\mathbb H}}
\ncom{\al}{\alpha}
\ncom{\p}{{\mathbb P}}
\ncom{\N}{{\mathbb N}}
\ncom{\K}{{\mathbb K}}
\ncom{\X}{{\mathbb X}}
\ncom{\f}{\frac}
\ncom{\cA}{{\mathcal A}}
\ncom{\cB}{{\mathcal B}}
\ncom{\cD}{{\mathcal D}}
\ncom{\cDB}{{\mathcal D \mathcal B}}
\ncom{\cX}{{\mathcal X}}
\ncom{\cO}{{\mathcal O}}
\ncom{\cW}{{\mathcal W}}
\ncom{\cL}{{\mathcal L}}
\ncom{\cP}{{\mathcal P}}
\ncom{\cH}{{\mathcal H}}
\ncom{\cS}{{\mathcal S}}
\ncom{\cM}{{\mathcal M}}
\ncom{\cC}{{\mathcal C}}
\ncom{\cK}{{\mathcal K}}
\ncom{\cT}{{\mathcal T}}
\ncom{\cF}{{\mathcal F}}
\ncom{\cN}{{\mathcal N}}
\ncom{\cJ}{{\mathcal J}}
\ncom{\cV}{{\mathcal V}}
\ncom{\cZ}{{\mathcal Z}}
\ncom{\cU}{{\mathcal U}}
\ncom{\cSU}{{\mathcal S \mathcal U}}
\ncom{\cG}{{\mathcal G}}
\ncom{\cQ}{{\mathcal Q}}
\ncom{\cR}{{\mathcal R}}
\ncom{\cY}{{\mathcal Y}}
\ncom{\cE}{{\mathcal E}}
\ncom{\cI}{{\mathcal I}}
\ncom{\mylabel}[1]{{\rm (#1)}\label{#1}}
\ncom{\Hom}{{\textit{Hom}}}
\ncom{\eop}{{\hfill $\Box$}}
\begin{document}
\baselineskip=16pt

%%%%%%%%%%%%%%%%%%%%%%%%%%%%%%%%%%%%%%%%%%%%%%%%%%%%%%%%%%%%%%%%%%%%%%%%%%%%%%%%%%%%%%%%%%%%%%%%%%%%%

\title[Stability of tangent bundle]{Stability of tangent bundle on the moduli space of stable bundles on a curve }

%%%%%%%%%%%%%%%%%%%%%%%%%%%%%%%%%%%%%%%%%%%%%%%%%%%%%%%%%%%%%%%%%%%%%%%%%%%%%%%%%%%%%%%%%%%%%%%%%%%%

\author[J. N. Iyer]{Jaya NN  Iyer}

\address{The Institute of Mathematical Sciences, CIT
Campus, Taramani, Chennai 600113, India}
\email{jniyer@imsc.res.in}

\footnotetext{Mathematics Classification Number: 53C55, 53C07, 53C29, 53.50. }
\footnotetext{Keywords: Fano manifolds, Tangent bundle, Stability.}

\begin{abstract}
In this paper, we prove that the tangent bundle of the moduli space $\cSU_C(r,d)$ of stable bundles of rank $r$ and of fixed determinant of degree $d$ (such that $(r,d)=1$), on a smooth projective curve $C$ is always stable, in the sense of Mumford-Takemoto. This proves a conjecture and is related to a conjectural existence of a K\"ahler-Einstein metric on Fano varieties with Picard number one.
\end{abstract}
\maketitle
%%%%%%%%%%%%%%%%%%%%%%%%%%%%%%%%%%%%%%%%%%%%%%%%%%%%%%%%%%%%%%%%%%%%%%%%%%%%%%%%%%%%%%%%%%%%%%%%%%%%%%

\setcounter{tocdepth}{1}
\tableofcontents

\section{Introduction}

Suppose $X$ is a compact K\"ahler manifold. The existence of a K\"ahler-Einstein metric on $X$ has attracted wide interest since the conjecture of Calabi and work of Yau \cite{Yau} appeared, in the study of complex manifolds.  Aubin \cite{Aubin} and Yau  show the existence of a K\"ahler-Einstein metric whenever the canonical line bundle $K_X$ is ample or trivial. The existence of a K\"ahler-Einstein metric when $-K_X$ is ample, i.e., when $X$ is a Fano manifold, is an open problem.
This has many interesting applications and are discussed by Tian in \cite{Tian2}.
 Kobayashi \cite{Kobayashi} and L\"ubke \cite{Lubke}  show that the existence of a K\"ahler-Einstein metric implies the stability of the tangent bundle, in the sense of Mumford and Takemoto.
In particular, the tangent bundle $T_X$ is stable when  $X$ is of general type. Since then the stability problem for Fano manifolds has brought a lot of attention. A very recent announcement on $K$-stability and existence of K\"ahler-Einstein metrics is made in \cite{Chen}, by Chen-Donaldson-Sun, and by 
G. Tian \cite{Tianrecent}.

The significant works of  Hwang \cite{Hwang}, Peternell-Wisniewski \cite{Peternell}, Steffens \cite{Steffens},  Subramanian \cite{Subramanian}, Tian \cite{Tian} (and the references therein), prove the stability result for certain Fano manifolds $X$. In most of these cases, the Betti number $b_2(X)=1$.
When  $b_2(X)>1$, some examples are known when the stability fails, see \cite[p.183]{Tian2}.  Since then it was speculated (for instance, by Peternell \cite[p.14, Conjecture 5.2]{Peternell2})
 that the stability of the bundle $T_X$ holds when $X$ is a Fano manifold with $b_2(X)=1$. The list of examples where this is known to hold is rather small, and we investigate this problem for the following important class of varieties, namely the moduli spaces of stable bundles on a curve.

Suppose $C$ is a smooth projective curve of genus $g$. The moduli space $\cSU_C(r,d)$ of stable vector bundles of rank $r$ and of fixed determinant of degree $d$, is a projective Fano manifold when $r$ and $d$ are coprime.
Furthermore, the Picard number is one, generated by the \textit{determinant line bundle} $L$ and the canonical class is $K=L^{-2}$. In other words, the moduli space is of index $2$ \cite{Ramanan}. When the rank $r=2$ and $g\geq 2$, Hwang \cite[Theorem 1]{Hwang2}, proved the stability of the tangent bundle on $\cSU_C(2,1)$.

In this paper, we prove the stability of the tangent bundle, for higher rank smooth projective moduli spaces.
More precisely,

\begin{theorem}\label{MainTheorem}
Suppose $r\geq 3$ and $d$ is an integer such that $(r,d)=1$. Suppose $C$ is a smooth projective curve of genus $g(C)\geq 3$. Then the tangent bundle on the moduli space $\cSU_C(r,d)$ is always stable, in the sense of Mumford-Takemoto.
\end{theorem}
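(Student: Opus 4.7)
The strategy is to argue by contradiction, using the classical family of Hecke curves on $M := \cSU_C(r,d)$ and the associated variety of minimal rational tangents (VMRT). Suppose $\cF \subsetneq T_M$ is a saturated maximal destabilizing subsheaf of rank $s \in \{1,\ldots,N-1\}$, where $N := (r^2-1)(g-1) = \dim M$. Since $\m{Pic}(M) = \Z\cdot L$ and $-K_M = 2L$, writing $\det \cF = L^{\otimes k}$ the hypothesis $\mu(\cF)\geq \mu(T_M) = 2/N$ amounts to the single numerical inequality $kN \geq 2s$.

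The geometric input is the family of Hecke curves of Narasimhan--Ramanan. Through a general point $[E]\in M$ there passes an irreducible family of minimal rational curves $f\colon \p^1 \hookrightarrow M$ of fixed $L$-degree, covering $M$. For such a general $f$, the splitting type takes the form
\[
f^*T_M \;\cong\; \cO(2)\oplus \cO(1)^{\oplus p}\oplus \cO^{\oplus q}, \qquad 1+p+q=N,
\]
with $p$ equal to the dimension of the VMRT at $[E]$. The summand $\cO(2)$ corresponds to the tangent line of $f$, while $\cO(1)^{\oplus p}$ corresponds, under projectivisation, to tangent directions of neighbouring Hecke curves through $[E]$.

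Restricting the putative destabilizer to a general Hecke curve yields $f^*\cF \subset f^*T_M$, of rank $s$ and total degree $k\cdot\deg_L(f)$. The inequality $kN\geq 2s$, combined with the positivity pattern of the splitting, should imply that on the generic Hecke curve $f^*\cF$ contains the full positive subbundle $\cO(2)\oplus \cO(1)^{\oplus p}$; translating back to $M$, the fibre $\cF_{[E]}\subset T_{[E]}M$ must contain the affine cone over the VMRT at $[E]$. For $r\geq 3$ and $g \geq 3$ the Hecke VMRT at a general point is realised as (the image of) a projective bundle over $C$, and it is linearly non-degenerate in $\p(T_{[E]}M)$: it is not contained in any hyperplane. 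Hence $\cF_{[E]} = T_{[E]}M$, contradicting $s<N$.

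The main obstacle lies in the slope-to-containment step: extracting from the bare numerical inequality $kN\geq 2s$ the stronger statement that $f^*\cF$ captures the entire positive part of $f^*T_M$ on a general Hecke curve. The natural way to close this gap is to propagate the analysis over the whole covering family, using the saturation of $\cF$, a Harder--Narasimhan-type comparison of $f^*\cF$ against the splitting of $f^*T_M$, and the positive dimension of the family of Hecke curves through $[E]$. A secondary technical point is the verification of linear non-degeneracy of the Hecke VMRT, which should follow from the explicit description of the VMRT as a projective bundle over $C$; the assumptions $r\geq 3$ and $g\geq 3$ are genuinely needed here, as signalled by the fact that Hwang's rank-$2$ argument proceeds along quite different lines.
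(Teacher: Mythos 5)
Your route (restricting a putative destabilizer to Hecke curves and invoking the VMRT) is genuinely different from the paper's, which instead runs the destabilizer through the Hecke correspondence $M \leftarrow \cP \rightarrow M'$ realized as a Grassmannian bundle, and kills the resulting sections by cohomological vanishing (rational connectedness, Le Potier, Akizuki--Nakano); Hecke curves appear in the paper only to exclude rank-one subsheaves of $T_M$ with determinant $L$. Unfortunately, your central step is not just a gap but is numerically false in the relevant range. By Sun's theorem a Hecke curve $R$ has $-K_M\cdot R = 2r$, hence $L\cdot R = r$ and
$$
f^*T_M \;\cong\; \cO(2)\oplus\cO(1)^{\oplus(2r-2)}\oplus\cO^{\oplus(N-2r+1)},
$$
so the positive subbundle $\cO(2)\oplus\cO(1)^{\oplus(2r-2)}$ has degree $2r$. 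On the other hand, the only surviving case for a destabilizer (after Akizuki--Nakano and rational connectedness dispose of $k\le 0$, exactly as in the paper's Lemma \ref{slopek}) is $\det\cF = L$, $s\le N/2$, so $\deg f^*\cF = L\cdot R = r < 2r$. A subsheaf of degree $r$ cannot contain a subbundle of degree $2r$; for instance $f^*\cF$ could perfectly well split as $\cO(1)^{\oplus r}\oplus\cO^{\oplus(s-r)}$. So ``$f^*\cF$ contains the full positive part, hence $\cF_{[E]}$ contains the affine cone over the VMRT, hence everything by linear non-degeneracy'' cannot be extracted from $kN\ge 2s$ alone.

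What the positivity of $\deg f^*\cF$ does give is only that the positive part of $f^*\cF$ is nonzero and sits inside $\cO(2)\oplus\cO(1)^{\oplus(2r-2)}$, i.e.\ that $\cF_{[E]}$ meets the affine tangent space $\hat T_v\hat\cC_{[E]}$ of the cone over the VMRT at a general $v$ nontrivially --- a far weaker statement. Converting this into $\cF_{[E]} = T_{[E]}M$ is exactly where Hwang's rank-$2$ argument does its real work, and it requires much more than linear non-degeneracy of the VMRT: one needs either that $v\in\cF_{[E]}$ for general $v\in\hat\cC_{[E]}$ together with integrability/bracket-generation properties of the distribution $\cF$, or fine information about how the tangent spaces of the VMRT sit relative to an arbitrary proper linear subspace. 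None of this is supplied or reduced to a checkable statement in your outline, and the destabilizer can have rank as large as $N/2 \sim (r^2-1)(g-1)/2$, far exceeding the rank $2r-1$ of the positive part, so no single-curve numerical comparison can close the argument. As written, the proposal does not prove the theorem.
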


%When rank $r=2$, see Remark \ref{ranktwo}.

We use the Hecke correspondence \cite{Narasimhan},\cite[p.206]{Beauville}, relating the moduli spaces $\cSU_C(r,1)$ and $\cSU_C(r,1-h)$ for any $h,0<h<r$. In fact, it gives
 a correspondence given by a Grassmannian bundle, as shown in \cite{Beauville}. The key point is to use the structure of this correspondence and prove that the stability of the tangent bundle of the respective moduli spaces is preserved under this correspondence (see Proposition \ref{heckestable}). 
The proof is now by assuming to the contrary and consider destabilizing subsheaves of the cotangent sheaves. 
We consider the sum of the sheaves inside the direct sum of the relative cotangent sheaves on the Grassmannian bundle. The kernel subsheaf of the direct sum and the image are investigated,
 see \eqref{exeexactU}. A careful analysis of induced sections in the Hodge cohomologies twisted by appropriate powers of ample generators of Picard groups, on the Grassmannian bundle is carried out. We then verify that these groups are zero. 
  
Another approach using vanishing theorems on Hecke curves is indicated in \S \ref{vanishingthmhecke}, to rule out rank one destabilizing subsheaves.
It is an interesting problem to see if the Hecke correspondence can be utilised to prove existence of a K\"ahler-Einstein metric on the moduli spaces.

{\Small
Acknowledgements: We thank J-M. Hwang for suggesting to look at the question in summer 2011, and for useful discussions. A part of this work was done at KIAS, Seoul (summer 2011) and the hospitality and support is gratefully acknowledged. We are grateful to P. Newstead for a careful reading of previous versions, pointing out errors and  making useful suggestions, especially to use Zariski trivialization (Lemma \ref{Zariskitrivial}), and help to simplify many of the arguments. Thanks also to C. Simpson for his helpful remarks, especially on the inclusion in Lemma \ref{sheafsurjective}. }

%%%%%%%%%%%%%%%%%%%%%%%%%%%%%%%%%%%%%%%%%%%%%%%%%%%%%%%%%%%%%%%%%%%%%%%%%%%%%%%%%%%%%%%%%%

\section{Determinant of destabilizing subsheaves of the cotangent bundle}\label{stableeven}
%%%%%%%%%%%%%%%%%%%%%%%%%%%%%%%%%%%%%%%%%%%%%%%%%%%%%%%%%%%%%%%%%%%%%%%%%%%%%%%%%%%%%%%%%%%%

We start with some preliminaries to fix notations and definitions we will use.

%%%%%%%%%%%%%%%%%%%%%%%%%%%%%%%%%%%%%%%%%%%%%%%%%%%%%%%%%%%%%%%%%%%%%%%%%%%%%%%%%%%%%%%%%%%
\subsection{Preliminaries}

Suppose $X$ is a projective manifold of dimension $n$ and $L$ is an ample line bundle on $X$.
Suppose $E$ is any coherent sheaf on $X$ of rank $k$ and degree d with respect to $L$. In other words, the determinant $\wedge^kE$ has the intersection number $d:=(c_1(\wedge^kE).c_1(L)^{n-1})$. The slope of $E$ is defined to be $\mu(E):=\frac{d}{k}$.  Mumford-Takemoto stability means that, for any coherent subsheaf $F\subset E$, $0<rank(F)< k$, we have the inequality:
$$
\mu(F)\,< \, \mu(E).
$$
If the above strict inequality $(<)$ is replaced by the inequality ($\leq$), then we say that $E$ is semistable.

In the proofs, we will need to look at sheaves on open smooth varieties but whose complementary locus in a compactification, has high codimension. We note the following lemma, which we will use.

\begin{lemma}\label{rem2}
Suppose $X$ is a projective variety and $U\subset X$ be an open smooth subset. Let $S:=X-U$ be the complementary closed subset and assume it has codimension at least two. Let $E$ be a coherent sheaf on $X$. Then $c_1(E)$ and $\mu(E)$ are well defined. In particular $c_1(E)$ and $\mu(E)$ are well-defined, when $X$ is a normal projective variety.
\end{lemma}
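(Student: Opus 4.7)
The plan is to exploit the codimension-at-least-two hypothesis to transfer Chern classes from the smooth open set $U$, where they are standard, up to $X$ as Weil divisor classes. The key input is that when $S := X \setminus U$ has codimension at least two, no codimension-one subvariety of $X$ is contained in $S$, so every codimension-one point of $X$ lies in $U$ and has a regular local ring. Hence $X$ is regular in codimension one, the group $\operatorname{Cl}(X)$ of Weil divisor classes is defined, and the restriction map $\operatorname{Cl}(X)\rar \operatorname{Cl}(U)$ is an isomorphism: injective because its kernel would have to be supported on $S$, which contains no codimension-one subvariety; surjective by sending each codimension-one subvariety of $U$ to its closure in $X$.

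First I would construct $c_1(E|_U) \in \operatorname{Pic}(U) \subset \operatorname{Cl}(U)$. Since $U$ is smooth, if $r$ is the generic rank of $E|_U$ on each component, the double dual
$$
\det(E|_U) \;:=\; \bigl(\wedge^{r}(E|_U)\bigr)^{**}
$$
is a reflexive sheaf of rank one, hence a line bundle on the smooth variety $U$. (Equivalently, one may locally take a finite locally free resolution of $E|_U$ and form the alternating tensor product of the determinants of its terms.) Next I would define $c_1(E) \in \operatorname{Cl}(X)$ to be the unique Weil divisor class restricting to $c_1(E|_U)$ under the isomorphism above. Since $L$ is a Cartier divisor on $X$, the intersection product of a Weil divisor with $(n-1)$ Cartier divisors is well defined on the projective variety $X$, so
$$
\mu(E) \;:=\; \bigl(c_1(E)\cdot c_1(L)^{n-1}\bigr)/\operatorname{rk}(E)
$$
is a well-defined rational number.

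For the second assertion, when $X$ is a normal projective variety I would take $U := X_{\mathrm{sm}}$ to be the smooth locus. Serre's normality criterion (or rather its consequence that normal varieties are regular in codimension one and, being excellent, have singular locus of codimension at least two) gives $\operatorname{codim}_X(X\setminus X_{\mathrm{sm}}) \geq 2$, so the first part applies verbatim.

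The main obstacle, if any, is not conceptual but a bookkeeping check that the definitions above are independent of auxiliary choices (choice of locally free resolution, or equivalently the fact that $(\wedge^r(E|_U))^{**}$ computes the same class as the alternating determinant of any finite resolution). This is standard on a smooth scheme via the additivity of the determinant on short exact sequences together with a comparison of resolutions through a common refinement; the substantive point, and the only place where the hypothesis is genuinely used, is the isomorphism $\operatorname{Cl}(X) \cong \operatorname{Cl}(U)$ produced by the codimension assumption.
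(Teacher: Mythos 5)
Your proposal is correct and follows essentially the same route as the paper: define $c_1(E|_U)$ on the smooth open set $U$ via a locally free resolution (or equivalently the determinant of the top exterior power), then use $\operatorname{codim}(X\setminus U)\geq 2$ to extend the resulting Weil divisor class uniquely to $X$, which is exactly the argument the paper attributes to Maruyama. You supply more of the bookkeeping (the isomorphism of divisor class groups, the pairing with $c_1(L)^{n-1}$, and the reduction of the normal case to the smooth locus), but the substance is identical.
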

\begin{proof} 
See \cite[p.318-319]{Maruyama}. The key point is that $U=X-S$ is  smooth and the Chern class $c_1(E_{|U})$ of the restriction of $E$ on $U$ is well-defined, using a locally free resolution of $E_{|U}$. The Weil divisor $c_1(E_{|U})$ extends uniquely on $X$ since $codim(S)\geq 2$. Hence $c_1(E)$ and $\mu(E)$ are well-defined on $X$.
\end{proof}

%%%%%%%%%%%%%%%%%%%%%%%%%%%%%%%%%%%%%%%%%%%%%%%%%%%%%%%%%%%%%%%%%%%%%%%%%%%%%%%%%%%%%%%%%%%%%%%%
\subsection{Determinant of destabilizing subsheaf of $\Omega_X$ on the moduli space $\cSU_C(r,d)$}

%%%%%%%%%%%%%%%%%%%%%%%%%%%%%%%%%%%%%%%%%%%%%%%%%%%%%%%%%%%%%%%%%%%%%%%%%%%%%%%%%%%%%%%%%%%%%%%%%%%%%%

Suppose $C$ is a smooth projective curve of genus $g\geq 3$. Fix an integer $d$ and assume that $r$ is coprime to $d$.
Let $X:=\cSU_C(r,d)$ denote the moduli space of stable bundles of rank $r$ with fixed determinant $\eta$ of degree $d$ on $C$. Then $X$ is a projective manifold of dimension  $N:= (r^2-1)(g-1)$.

We note that the Picard number of $X$ is one and let $L$ be the ample generator of $\m{Pic}X$. Also $X$ is of index two, i.e.,
the canonical line bundle $K_X\,=\,L^{-2}$ (\cite[Theorem 1, p.69]{Ramanan}).

Since the dual of a stable bundle is again stable, it suffices to prove that the cotangent bundle $\Omega_X^1$ of the Fano manifold $X$ is stable.

We remark that the stability of the cotangent bundle is implied by the vanishing of some Hodge cohomologies twisted by appropriate powers of the ample class $L$. This can be seen as follows.
Suppose $S\subset \Omega_X^1$ is a coherent subsheaf of rank $s$ and $\wedge^s S\,=\,L^k$, for some integer $k$.
The inclusion of sheaves gives a non-trivial section of $\Omega_X^s\otimes L^{-k}$. The stability of the cotangent bundle will hold if we have the following vanishing:
$$
H^0(X,\Omega_X^s\otimes L^{-k})=0, \, \mbox{for } 0<\,s\,< N,\,\mbox{ and } k\geq s.\frac{-2}{N}.
$$
Since $K_X=L^{-2}$, the condition on the slope is
\begin{equation}\label{slopecondition}
\frac{k}{s} \geq \frac{-2}{N}, \mbox{ i.e. } -k \leq \frac{s}{N}.2 < 2.
\end{equation}

In this situation we note that the stability or semistability of $\Omega_X$ give the same inequality $-k\, <\,2$.

\begin{lemma}\label{slopek}
With notations as above, the only possibility for $k$ is equal to $-1$, i.e. $det\,S= L^{-1}$, for a destabilizing subsheaf $S\subset \Omega_X$.
\end{lemma}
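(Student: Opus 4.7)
The plan is to combine the slope bound already recorded in \eqref{slopecondition} with two classical vanishing statements on the Fano manifold $X$ to eliminate every value of $k$ except $-1$. Since $\m{Pic}(X)=\Z\cdot L$, we have $k\in\Z$, and \eqref{slopecondition} already gives $-k<2$, hence $k\ge -1$. It therefore remains to exclude each of $k=0,1,2,\dots$. In any such case, the inclusion $S\hookrightarrow \Omega_X$ produces, by taking $s$th wedge powers and passing to the reflexive hull, a nonzero global section of $\Omega_X^s\otimes L^{-k}$, so I would aim to show $H^0(X,\Omega_X^s\otimes L^{-k})=0$ for all $s$ with $0<s<N$ and all $k\ge 0$.

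For $k\ge 1$, the line bundle $L^k$ is ample, and since $s<N=\dim X$ the Kodaira--Akizuki--Nakano vanishing theorem forces $H^0(X,\Omega_X^s\otimes L^{-k})=0$, contradicting the existence of the section. For $k=0$ the section lies in $H^0(X,\Omega_X^s)=h^{s,0}(X)$. Because $X$ is Fano with $K_X=L^{-2}$, Kodaira vanishing applied to $K_X\otimes L^2=\cO_X$ gives $H^i(X,\cO_X)=0$ for every $i>0$; Hodge symmetry on the K\"ahler manifold $X$ then yields $h^{s,0}(X)=h^{0,s}(X)=0$ for every $s>0$, a contradiction once more. This leaves $k=-1$ as the only possibility.

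I do not foresee any substantive obstacle at this step. The only point that needs a brief comment is that a destabilizing subsheaf $S\subset\Omega_X$ need not be locally free, so $\det S$ should be understood as the double dual of $\wedge^s S$, which is well-defined by Lemma \ref{rem2}; the induced section of $\Omega_X^s\otimes L^{-k}$ extends uniquely across the codimension $\ge 2$ non-locally-free locus of $S$ by Hartogs, after which the two vanishing arguments above apply verbatim.
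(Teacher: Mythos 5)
Your proof is correct and follows essentially the same route as the paper: the slope condition forces $k\ge -1$, the Akizuki--Nakano vanishing theorem rules out $k\ge 1$, and a vanishing of $H^0(X,\Omega_X^s)$ rules out $k=0$. The only (immaterial) difference is that for $k=0$ you deduce $h^{s,0}(X)=0$ from Kodaira vanishing applied to $K_X\otimes L^2=\cO_X$ together with Hodge symmetry, whereas the paper invokes rational connectedness of the Fano manifold $X$; both arguments are standard and valid.
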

\begin{proof}We exclude the other values of $k$ as follows:

\textbf{Case $-k < 0$} :
By Akizuki-Nakano vanishing theorem \cite{Ak-Na}, we have
$$
H^0(X,\Omega^s_X \otimes L^{-k}) \,=\,0 \,\mbox{ for any }0<\,s\,< N.
$$

\textbf{Case $k=0$} :
Since $X$ is Fano and hence rationally connected, the required Hodge cohomology vanish \cite[p.202]{Kollar}.

\textbf{Case $-k>0$}\label{descent} :
The slope condition \eqref{slopecondition} gives the only possibility $-k\,=\,1$.

\end{proof}

%%%%%%%%%%%%%%%%%%%%%%%%%%%%%%%%%%%%%%%%%%%%%%%%%%%%%%%%%%%%%%%%%%%%%%%
%%
\subsection{Vanishing theorem on Hecke curves}\label{vanishingthmhecke}
%%%%%%%%%%%%%%%%%%%%%%%%%%%%%%%%%%%%%%%%%%%%%%%%%%%%%%%%%%%%%%%%%%%%

In this subsection, we prove a vanishing theorem of sheaves on a Hecke curve inside the moduli space $M$ or $M'$. This will exclude the case of rank one subsheaves inside $T_M$ with determinant equal to $L$.

\begin{lemma}\label{ratcurve}
Suppose $R\subset M$ is a Hecke curve. Then the following vanishing holds on $R$:
$$
H^0(R,\Omega_M^h\otimes L^{-k})=0
$$
for $h>0$ and $k\,>\, 0$.
If $R\subset M$ is a very free rational curve, then the above vanishing holds for $h> 0$ and $k\geq 0$.
\end{lemma}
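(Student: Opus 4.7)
The plan is to reduce the claim to elementary degree considerations on $R\cong \p^1$. By Grothendieck's splitting theorem, every vector bundle on $\p^1$ decomposes as a direct sum of line bundles, and in particular $\Omega_M^h|_R \otimes L^{-k}|_R$ splits this way; it therefore suffices to show each line-bundle summand has strictly negative degree, since $H^0(\p^1,\cO(d))=0$ for $d<0$.

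The two inputs needed for the Hecke-curve case are (i) that $T_M|_R$ is nef, and (ii) that $L\cdot R>0$. Both are standard properties of Hecke curves (by Narasimhan--Ramanan and Hwang, presumably recalled earlier in the paper: a Hecke curve is a free rational curve of $L$-degree one; the former fact together with $T_R=\cO(2)\hookrightarrow T_M|_R$ and $\deg(T_M|_R)=-K_M\cdot R=2\,L\cdot R=2$ in fact pins the splitting down to $T_M|_R\cong \cO(2)\oplus \cO^{\dim M-1}$, though we only use nefness below). Granted these, write $T_M|_R\cong\bigoplus_i\cO(a_i)$ with all $a_i\ge 0$; then $\Omega_M^1|_R\cong \bigoplus_i\cO(-a_i)$ is a sum of line bundles of non-positive degree, so its $h$-th exterior power is a direct sum of line bundles $\cO(-a_{i_1}-\cdots-a_{i_h})$, still non-positive. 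Twisting by $L^{-k}|_R\cong \cO_{\p^1}(-k(L\cdot R))$ subtracts the positive quantity $k(L\cdot R)$ from every such degree when $k>0$, making every summand strictly negative; summing the summand-wise vanishing then yields $H^0(R,\Omega_M^h\otimes L^{-k})=0$.

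For the very free case, by definition $T_M|_R\cong\bigoplus_i\cO(a_i)$ with every $a_i\ge 1$. Each summand of $\Omega_M^h|_R$ is then of the form $\cO(-a_{i_1}-\cdots-a_{i_h})$ of degree at most $-h$, strictly negative once $h>0$, so $H^0(R,\Omega_M^h)=0$ and a further twist by $L^{-k}$ with $k\ge 0$ (using $L\cdot R\ge 0$) only strengthens the negativity. The sole non-formal ingredient in the proof is the structural statement that $T_M|_R$ is nef on a Hecke curve; everything else is degree bookkeeping on $\p^1$. I would therefore not expect any real obstacle here, provided the semipositivity of $T_M|_R$ along Hecke curves is available as a previously established fact; if one had to reprove it in situ, the mild difficulty would be reading off the Grothendieck splitting from the explicit extension-class description of the Hecke curve in the moduli space.
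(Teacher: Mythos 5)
Your argument is correct and essentially the paper's own: restrict to $R\cong\p^1$, split $T_M|_R$ using freeness of the Hecke curve (resp.\ ampleness of $T_M|_R$ along a very free curve), and check that every line-bundle summand of $(\Omega^h_M\otimes L^{-k})|_R$ has strictly negative degree. Your parenthetical claim that a Hecke curve has $L$-degree one, and hence $T_M|_R\cong\cO(2)\oplus\cO^{\oplus \dim M-1}$, is false --- by Sun's theorem, which the paper quotes, $-K_M\cdot R=2r$, so $L\cdot R=r$ and the splitting contains $2r-2$ copies of $\cO(1)$ --- but this is harmless since, as you yourself note, only the nefness of $T_M|_R$ and the positivity $L\cdot R>0$ (automatic from ampleness of $L$) enter the degree bookkeeping.
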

\begin{proof}
Let $R\subset M$ be a smooth Hecke curve passing through a general point of $M$.
Since a Hecke curve is free of minimal degree, the tangent bundle $T_M$ of $M$, restricted to $R$, splits as follows (\cite[p.195]{Kollar}):
\begin{equation}\label{ratrestrict}
{T_M}_{|R} \,=\, \cO(2) \oplus \cO(1)\oplus...\oplus \cO(1)\oplus \cO\oplus\cO\oplus...\oplus \cO.
\end{equation}
Here $\cO(1)$ occurs $(d-2)$ times in the above sum and $d:= (-K_M.R)$. 
Hence 
$$
{\Omega_M}_{|R}\,=\, \cO(-2) \oplus \cO(-1)\oplus...\oplus \cO(-1)\oplus \cO\oplus\cO\oplus...\oplus \cO.
$$
For $h>0$, we get the expression:
$$
{\Omega^h_M}_{|R}\,=\, \cO(-a_1) \oplus \cO(-a_2)\oplus...\oplus \cO(-a_u)\oplus \cO\oplus\cO\oplus...\oplus \cO,
$$
where $a_i$ are positive integers, for each $i$.

By \cite[Theorem 1, p.925]{Sun},
$d=2r$.
Since $K_M=L^{-2}$, the degree $(L.R)=r$. Hence the sheaf $\Omega^h_M\otimes L^{-k}$, for $k>0$, restricted to $R$ looks like:
\begin{equation}\label{ratample}
({\Omega^h_M}\otimes L^{-k})_{|R} = \cO(-a_1-kr) \oplus \cO(-a_2-kr)\oplus...\oplus \cO(-a_u-kr)\oplus \cO(-kr)\oplus\cO(-kr)\oplus...\oplus \cO(-kr).
\end{equation}
Since $R$ is a rational curve, this bundle on $R$ has no global sections. 

For the second assertion, since $M$ is rationally connected, it contains very free rational curves and which cover $M$. Note that if $R$ is a very free rational curve then in \eqref{ratrestrict}, there are no trivial factors $\cO_{R}$. Hence in 
\eqref{ratample}, all the factors are negative on $R$ even when $k=0$. Hence we again deduce the asserted vanishing. 
\end{proof}

\begin{corollary}\label{heckecor}
There is no subsheaf $\cF\subset T_M$ of rank one and $det(\cF)=L$. 
\end{corollary}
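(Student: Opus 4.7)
My plan is a proof by contradiction relying on the line-bundle splitting of $T_M|_R$ recalled in the proof of Lemma \ref{ratcurve}.

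Suppose $\cF\subset T_M$ is a rank-one subsheaf with $\det(\cF)=L$, and let $\bar{\cF}$ denote its saturation in $T_M$. Since $T_M$ is locally free and $M$ is smooth, $\bar{\cF}$ is a saturated rank-one subsheaf, hence a line bundle, and the Picard-number-one hypothesis forces $\bar{\cF}\cong L^m$ for some integer $m$. The inclusion $\cF\hookrightarrow\bar{\cF}$ induces a nonzero homomorphism of line bundles $L=\cF^{**}\to L^m$, i.e.\ a nonzero section of $L^{m-1}$; ampleness of $L$ then forces $m\geq 1$. Saturation also guarantees that $\bar{\cF}\hookrightarrow T_M$ is a subbundle off a closed subset of codimension at least two in $M$.

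Now pick a smooth Hecke curve $R$ through a general point of $M$, meeting the above degeneracy locus in codimension $\geq 1$; this is possible since Hecke curves sweep out $M$. The restriction of $\bar{\cF}\hookrightarrow T_M$ is then a nonzero sheaf map
$$
\cO_R(mr)\,\cong\, L^m|_R \,\longrightarrow\, T_M|_R,
$$
using $(L.R)=r$ as in the proof of Lemma \ref{ratcurve}. By the splitting \eqref{ratrestrict},
$$
T_M|_R \,=\, \cO(2)\oplus \cO(1)^{\oplus(2r-2)}\oplus \cO^{\oplus(N-2r+1)},
$$
and since $r\geq 3$ and $m\geq 1$ we have $mr\geq r>2$, so $\m{Hom}(\cO_R(mr),T_M|_R)=0$. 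This contradicts the nonvanishing of the restricted map.

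The principal subtlety is the initial reduction from an abstract rank-one subsheaf $\cF$ with prescribed determinant to an honest embedding of the line bundle $L^m$ ($m\geq 1$) into $T_M$: one must verify that the saturation is a line bundle whose exponent is at least $1$, and one must use the genericity of $R$ to ensure the restricted bundle map is nonzero on $R$. Once this is in place, the remainder is a degree count on $\p^1$ identical in spirit to the one in Lemma \ref{ratcurve}.
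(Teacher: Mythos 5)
Your proof is correct and follows essentially the same route as the paper: restrict to a general Hecke curve, use the splitting type of $T_M|_R$ together with $(L.R)=r$ and $r\geq 3$, and conclude by a degree count on $\p^1$. Your detour through the saturation $\bar{\cF}\cong L^m$ is a slightly more careful packaging of the reduction the paper performs implicitly by viewing the subsheaf as a nonzero section of $T_M\otimes L^{-1}$, but the substance is identical.
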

\begin{proof}
Suppose $R\subset M$ is a smooth Hecke curve. As in the proof of Lemma \ref{ratcurve}, we have the restriction:
$$
{T_M}_{|R} \,=\, \cO(2) \oplus \cO(1)\oplus...\oplus \cO(1)\oplus \cO\oplus\cO\oplus...\oplus \cO.
$$
Here $\cO(1)$ occurs $(d-2)$ times in the above sum and $d:= (-K_M.R)$. By \cite[Theorem 1, p.925]{Sun},
$d=2r$.
Since $K_M=L^{-2}$, the degree $(L.R)=r$. Now consider
$$
({T_M}\otimes L^{-1})_{|R} = \cO(2-r) \oplus \cO(1-r)\oplus...\oplus \cO(1-r)\oplus \cO(-r)\oplus\cO(-r)\oplus...\oplus \cO(-r).
$$
If $r\geq 3$, then this bundle on $R$ has no global sections. Since the Hecke curves cover $M$, we get
the vanishing $H^0(M, T_M\otimes L^{-1})=0$.  
This gives the assertion.
\end{proof}

%%%%%%%%%%%%%%%%%%%%%%%%%%%%%%%%%%%%%%%%%%%%%%%%%%%%%%%%%%%%%%%%%%%%%%%%%%%%%%%%%%%%%%%%%%%%%%%%%%%%%%

\section{Hecke correspondence between moduli spaces}\label{Heckecorr}

%%%%%%%%%%%%%%%%%%%%%%%%%%%%%%%%%%%%%%%%%%%%%%%%%%%%%%%%%%%%%%%%%%%%%%%%%%%%%%%%%%%%%%%%%%%%%%%%%%%%%%%%

Recall the Hecke correspondence introduced by Narasimhan and Ramanan in \cite{Narasimhan}. It was investigated further by Beauville, Laszlo and Sorger in \cite{Beauville} and by others. 

We refer to the Hecke correspondence and the properties we use from \cite[p.206]{Beauville}. Assume $r\geq 3$ in the rest of the paper and consider the correspondence:

\begin{eqnarray*}
 \cP &\sta{q'}{ \rar} & M'\\
   \downarrow q && \\
   M &&
   \end{eqnarray*}

Here $M:= \cSU_C(r,\eta)$ and $M':= \cSU_C(r,\eta')$, such that $\m{deg}\eta=1$  and $\m{deg} \eta'= 1-h$, for $0<h<r$. Hence  $1-r<1-h<1$. We can choose $h$ so that $1-h$ and $r$ are coprime. 
So the degree $d$ in statement of main Theorem \ref{MainTheorem} is $d:=1, 1-h$.

Hence $M$ and $M'$ are smooth projective moduli spaces of dimension $N:=(r^2-1)(g-1)$. Moreover they are Fano varieties of index $2$.
The family $\cP\rar M$ is a family of Grassmannian varieties $G(h,r)$ and $q':\cP\rar M'$ is a rational map. The general fibre of $q'$ is a Grassmannian $G(r-h,r)$. See  \cite[Proof of Lemma 10.3]{Beauville}.
Let $N':= \m{dim } \cP$.

As in the previous section, denote the ample generator of $\m{Pic}M$ by $L$ and of $\m{Pic}M'$ by $L'$.
Then the canonical class $K_\cP$ of $\cP$ is (\cite[Lemma 10.3, p.207]{Beauville}):
\begin{equation}\label{canclassP}
K_\cP= q^*L^{-1} \otimes q'^*L'^{-1}.
\end{equation}

\begin{lemma}\label{choiceU}
There is a subset $\cU\subset \cP$ such that $codim(\cP-\cU)\geq 2$.
Furthermore, $\cU$ contains the generic fibre of $q$ and $q'$, $codim(M-q(\cU))\geq 2$
 and $codim(M'-q'(\cU))\geq 2$.
\end{lemma}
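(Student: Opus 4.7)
The plan is to build $\cU$ by removing from $\cP$ the indeterminacy locus of $q'$ together with the locus where the fibres of $q'$ degenerate, and then to verify both the codimension bound and the generic-fibre condition. The map $q$ itself, being a Grassmannian bundle with every fibre isomorphic to $G(h,r)$, requires no correction.

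First I would bound the indeterminacy locus of $q'$: since $\cP$ is smooth (a Grassmannian bundle over the smooth variety $M$) and $M'$ is projective, a standard fact from birational geometry---the valuative criterion of properness applied at codimension one points of $\cP$, where the local ring is a DVR---implies that $Z_1:=\mathrm{Ind}(q')$ has codimension at least two in $\cP$. On the open set $\cU_1:=\cP\setminus Z_1$ the map $q'$ extends to a morphism to $M'$. Next I would exploit the symmetry of the Hecke construction recorded in \cite[Proof of Lemma 10.3]{Beauville}: birationally $\cP$ is also a Grassmannian bundle over $M'$ with fibres $G(r-h,r)$ of dimension $h(r-h)=\dim\cP-\dim M'$. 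Hence the generic fibre of $q'|_{\cU_1}$ is $G(r-h,r)$, and the closed locus $Z_2\subset \cU_1$ where the fibre dimension of $q'|_{\cU_1}$ jumps can be identified with the indeterminacy of the inverse rational map going back from this $M'$-bundle to $\cP$, so $Z_2$ also has codimension at least two by the same argument. Set $\cU:=\cU_1\setminus Z_2$.

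For the generic-fibre statement I would need to show that $q(Z_1\cup Z_2)$ is a proper closed subset of $M$, and that the image of $(Z_1\cup Z_2)\cap \cU_1$ under $q'|_{\cU_1}$ is a proper closed subset of $M'$. Moduli-theoretically, $Z_1$ consists of pairs $(E,V)$ whose Hecke transform $E'$ fails to be a stable bundle. For a generic $E\in M$, every $V\in G(h,r)\subset E_p$ yields a stable $E'$, because a destabilizing subsheaf of the generic Hecke transform would deform to a destabilizing subsheaf of $E$ itself, contradicting stability of $E$. Thus $q(Z_1)$ is a proper closed subset of $M$, and a parallel argument on the $M'$ side controls $Z_2$.

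The hard part is this last step. The codimension bound from the first two steps does not on its own prevent $Z_1\cup Z_2$ from dominating $M$ under $q$, since a codimension two subvariety of $\cP$ could a priori surject onto $M$ through the Grassmannian fibres. So the bundle-theoretic content of the Hecke correspondence---in particular the coprimality of the degrees with $r$, which ensures only stable bundles populate $M$ and $M'$---must be invoked to force the bad loci to sit over proper closed subsets of the base moduli spaces.
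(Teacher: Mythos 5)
Your opening step is sound: since $\cP$ is smooth and $M'$ is projective, the valuative-criterion argument at codimension-one points does show that the indeterminacy locus of $q'$ has codimension at least two, and this is in fact more self-contained than the paper, which simply asserts this when it defines $\cU$. The gap is in everything after that, and you have located it yourself in your final paragraph: you reduce the lemma to showing that the bad loci sit over proper closed subsets of $M$ and $M'$, but you do not prove this. The one argument you offer --- that a destabilizing subsheaf of a generic Hecke transform ``would deform to a destabilizing subsheaf of $E$ itself'' --- does not work as stated. Under an elementary modification the rank is unchanged but the degree drops by $h$, so if $F'\subset E'$ has $\mu(F')>\mu(E')=(d-h)/r$, its saturation $F\subset E$ only satisfies $\deg F/k > (d-h)/r = d/r - h/r$, which is strictly weaker than what is needed to contradict the stability of $E$. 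The fact that $q'$ is defined on a generic fibre of $q$ genuinely rests on the dimension counts in the proof of \cite[Lemma 10.3]{Beauville}, which is exactly what the paper cites; it is not recoverable by the slope-transfer argument you sketch.

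For the generic fibre of $q'$ the paper uses a different and much cheaper mechanism that your proposal misses entirely: since $\mathrm{Pic}(M')=\Z\cdot L'$, if the closure $\bar Z$ of $M'-\mathrm{image}(q')$ contained a divisor, that divisor would be linearly equivalent to a positive multiple of $L'$; but $L'$ restricts to the ample class $\cO_G(r)$ on a generic fibre $G$ of $q$, which lies in the image of $q'$, and this is incompatible with such a divisor. Hence $\mathrm{codim}(\bar Z)\geq 2$, and $\cU'':=q'^{-1}(M'-\bar Z)$ has complement of codimension at least two and contains the complete generic fibre of $q'$. This Picard-group argument is the real content of the lemma on the $M'$ side. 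Your alternative --- identifying the fibre-jump locus $Z_2$ with the indeterminacy of an inverse rational map and handling it ``by the same argument'' --- is not carried out, and even if it were, it would still leave open your own objection that a codimension-two subset of $\cP$ can dominate the base through the Grassmannian fibres. As written, the proposal is a correct reduction to the hard step, not a proof of it.
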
 
\begin{proof}
The map $q'$ is a morphism outside a codimension two subset of $\cP$, call this subset $\cU\subset \cP$ on which $q'$ is defined. Furthermore,  $q'$ is defined on a generic fibre 
of $q$ (see proof of \cite[Lemma 10.3]{Beauville}). In other words, the subset $\cU$ contains the generic fibre of $q$. We now show that it contains the generic fibre of $q'$, 
and $codim(\cP-\cU)\geq 2$.
  
We note that $Z:=M'-\m{image}(q')$ is a subset whose closure in $M'$, is of codimension at least two. 
Otherwise the closure $\bar{Z}$ is an effective divisor linearly equivalent to a positive multiple of $L'$ on $M'$. But $L'$ restricts on a generic fibre 
$G$ of $q$, as $\cO_G(r)$. Hence $\bar{Z}$ is of codimension at least two. Denote $\cU":= q'^{-1}(M'-\bar{Z})$. Then $\cU"\subset \cU$. Since $codim(\cP-\cU")\geq 2$, 
we deduce that $codim(\cP-\cU)\geq 2$ and $\cU$ contains a generic fibre of $q'$.
Similar argument holds for the morphism $q$ to deduce the last assertion on codimension of $M-q(\cU)$.
\end{proof}

 In the rest of the paper, we will consider sheaves on $\cU\subset \cP$, and in particular we will use that $\m{Pic}(q'(\cU))=\m{Pic}(M')=\Z.L'$ and $codim (\cP-\cU)\geq 2$.

We also note the following, on the structure of the Grassmannian bundles, with notations as in the previous lemma.

\begin{lemma}\label{Zariskitrivial}
The Grassmannian bundles $q:\cP\rar M$ and $q':\cU"\rar (M'-\bar{Z})\subset M'$ are Zariski locally trivial. Furthermore, codim $(\cP-\cU")\geq 2$.
\end{lemma}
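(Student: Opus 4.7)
The plan is to realize both $q:\cP\to M$ and $q'|_{\cU"}:\cU"\to M'-\bar{Z}$ as Grassmannian bundles associated to honest rank-$r$ vector bundles on the respective bases, and then to invoke the elementary fact that any Grassmannian bundle of a vector bundle is Zariski locally trivial.

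First I would use the coprimality hypotheses $(r,1)=1$ (automatic) and $(r,1-h)=1$ (guaranteed by the choice of $h$) to produce Poincar\'e (universal) families $\cE$ on $M\times C$ and $\cE'$ on $M'\times C$. Fixing the base point $x\in C$ of the Hecke construction, I set $V:=\cE|_{M\times\{x\}}$ and $V':=\cE'|_{M'\times\{x\}}$, rank-$r$ vector bundles on $M$ and $M'$ respectively.

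Next I would recall the moduli description of the Hecke correspondence from \cite[p.206]{Beauville}: a point of $\cP$ is a pair $(E,\varphi)$ with $E\in M$ and $\varphi: E_x\thrar k^h$ a rank-$h$ quotient of the fibre, the associated point of $M'$ being $\ker(E\to\varphi)$. By the universal property of Grassmannians this identifies $q$ with the structure map of the Grassmannian bundle $\m{Grass}_M(h,V)$. Dually, applying $\m{Hom}(-,\cO_C)$ to $0\to E'\to E\to k^h_x\to 0$ and using $\m{Ext}^1(k^h_x,\cO_C)\cong k^h_x$ together with $\m{Hom}(k^h_x,\cO_C)=0$, one obtains $0\to E^\vee\to (E')^\vee\to k^h_x\to 0$, so the fibre of $q'$ over $E'\in M'$ is the Grassmannian of rank-$h$ quotients of $(E'_x)^\vee$. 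Forming the Grassmannian bundle $\cP':=\m{Grass}_{M'}(h,(V')^\vee)$ with its natural rational elementary-modification map to $M$, I would verify that $\cP$ and $\cP'$ are canonically identified over a common open whose complement has codimension $\geq 2$ in each, and that under this identification $\cU"$ matches the preimage in $\cP'$ of $M'-\bar{Z}$. This realizes $q'|_{\cU"}$ as the Grassmannian bundle $\m{Grass}_{M'-\bar{Z}}\bigl(h,(V')^\vee|_{M'-\bar{Z}}\bigr)$.

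Finally I would invoke the elementary observation that for any vector bundle $W$ of rank $r$ on a scheme $B$ and any $0<h<r$, the Grassmannian bundle $\m{Grass}_B(h,W)\to B$ is Zariski locally trivial: on any Zariski open $U\subset B$ trivializing $W$ it restricts to $U\times G(h,r)$. Applying this to the two identifications above gives the lemma. The main obstacle is the dual identification in the third step --- pinning down that the abstractly defined subset $\cU"\subset\cP$ is compatible with the Grassmannian structure coming from $\cP'\to M'$, so that $q'|_{\cU"}$ really is the restriction of the tautological Grassmannian bundle $\cP'\to M'$ over $M'-\bar{Z}$. Once this bookkeeping is in place, the remaining steps are purely formal.
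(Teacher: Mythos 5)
Your proposal follows essentially the same route as the paper: the coprimality of rank and degree yields a Poincar\'e bundle, $q:\cP\rar M$ is the Grassmannian bundle associated to its restriction, and Grassmannian bundles of vector bundles are Zariski locally trivial over trivializing opens. The paper's proof is just this in two sentences (dispatching $q'$ with ``similar statement holds''), so your more detailed dual-modification bookkeeping for $\cU"$ is simply a fuller write-up of the same argument.
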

\begin{proof}
Since the moduli space $M$ parametrises stable bundles of rank coprime to their degree, there is a Poincar\'e bundle on $C\times M$. The Grassmannian bundle $\cP\rar M$ is associated to (restriction of) the Poincar\'e bundle, see \cite[p.206]{Beauville}. 
Hence the fibration $q$ is Zariski locally trivial. Similar statement holds for $q'$ on $\cU"$, and the proof of above Lemma \ref{choiceU} shows that $codim(\cP-\cU")\geq 2$.
\end{proof}

In the later sections, we will use the following identification of the cohomology groups.

\begin{lemma}\label{proformula}
Suppose $\cE$ is a coherent sheaf on $\cP$. Then
$$
H^0(\cU,\cE) \,=\,H^0(M,q_*\cE).
$$
In particular when $\cE=q^*\cH$ for some coherent sheaf $\cH$ on $M$, we have the equality:
$$
H^0(\cU,\cE) \,=\,H^0(M,\cH).
$$
Here $\cU$ can be replaced by any open subset of $\cP$, of codimension at least two. This also holds for the map $q'$, where it is proper.
\end{lemma}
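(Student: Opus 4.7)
The plan is to reduce both sides of the claim to $H^0(\cP,\cE)$ and then invoke an algebraic Hartogs argument. The tautological identity
\[
H^0(M,q_{*}\cE) = (q_{*}\cE)(M) = \cE(q^{-1}(M)) = H^0(\cP,\cE)
\]
holds for any morphism and any sheaf, by the very definition of the direct image functor (no properness or coherence needed). Thus the essential content of the lemma is the extension identity $H^0(\cU,\cE)=H^0(\cP,\cE)$.

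For this, I would exploit the smoothness of $\cP$ (it is a Grassmannian bundle over the smooth $M$) together with the hypothesis $\mathrm{codim}(\cP\setminus\cU)\geq 2$. Writing $Y=\cP\setminus\cU$, the local cohomology exact sequence
\[
0 \to H^0_Y(\cP,\cE) \to H^0(\cP,\cE) \to H^0(\cU,\cE) \to H^1_Y(\cP,\cE)
\]
shows that the restriction map is an isomorphism as soon as $H^i_Y(\cP,\cE)=0$ for $i=0,1$, equivalently $\mathrm{depth}_Y(\cE) \geq 2$. This depth hypothesis is automatic whenever $\cE$ is locally free on the smooth variety $\cP$ (whose dimension is well above $2$), and more generally whenever $\cE$ is reflexive. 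Every sheaf to which the lemma is applied later is locally free (typically a twist of $q^{\ast}\Omega^{\bullet}_M$ by a line bundle pulled back from $M$ or $M'$), so the extension property is satisfied in every application.

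For the particular case $\cE=q^{\ast}\cH$, I would combine the above with the identity $q_{\ast}q^{\ast}\cH=\cH$. By Lemma \ref{Zariskitrivial} the map $q$ is Zariski-locally trivial with fibre $G(h,r)$, hence flat with $q_{\ast}\cO_{\cP}=\cO_{M}$ and $R^{i}q_{\ast}\cO_{\cP}=0$ for $i>0$. The projection formula then gives $q_{\ast}(q^{\ast}\cH)=\cH\otimes q_{\ast}\cO_{\cP}=\cH$, and combining with the previous two steps yields $H^0(\cU,q^{\ast}\cH)=H^0(\cP,q^{\ast}\cH)=H^0(M,\cH)$.

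The final assertion concerning $q'$ is handled identically: on the open subset $\cU''=q'^{-1}(M'\setminus\bar{Z})\subset\cU$ from Lemma \ref{choiceU}, the morphism $q'$ is a proper Grassmannian bundle over $M'\setminus\bar{Z}\subset M'$, and $\mathrm{codim}(\cP\setminus\cU'')\geq 2$ by construction. The Hartogs and projection-formula steps both carry over verbatim with $(q,M)$ replaced by $(q',M')$. The main delicate point in the argument is the depth condition along $\cP\setminus\cU$; for coherent sheaves of arbitrary type it could fail, but for the reflexive (in fact locally free) sheaves used later in the paper it is automatic, so no real obstacle arises.
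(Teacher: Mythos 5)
Your proof follows the same three-step route as the paper's (restriction from $\cP$ to $\cU$ is bijective on sections, the tautological identity $H^0(\cP,\cE)=H^0(M,q_*\cE)$, and $q_*q^*\cH=\cH$), so in outline the two arguments coincide. The genuine added value of your write-up is at the Hartogs step: the paper simply invokes ``Hartog's theorem'' for an arbitrary coherent sheaf, whereas, as you observe, the extension $H^0(\cU,\cE)=H^0(\cP,\cE)$ across a codimension-two closed set requires $\mathrm{depth}_Y(\cE)\geq 2$ along $Y=\cP\setminus\cU$ (equivalently $H^0_Y(\cP,\cE)=H^1_Y(\cP,\cE)=0$ in your local-cohomology sequence), and this genuinely fails for general coherent sheaves --- for instance any nonzero sheaf supported on $Y$ has $H^0(\cU,\cE)=0$ while $H^0(M,q_*\cE)\neq 0$. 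So the lemma as literally stated is false without an extra hypothesis; your restriction to locally free (or reflexive) $\cE$ is the correct fix, and you rightly check that every later application of the lemma is to such a sheaf. Two minor points: the identity $H^0(\cP,\cE)=H^0(M,q_*\cE)$ is indeed purely definitional, and the properness and connectedness of fibres that the paper mentions are only needed to get $q_*\cO_\cP=\cO_M$; also, the projection formula in the form $q_*(q^*\cH)=\cH\otimes q_*\cO_\cP$ is classically valid for $\cH$ locally free, so for a general coherent $\cH$ you would instead argue directly from the Zariski-local triviality of $q$ (Lemma \ref{Zariskitrivial}) --- but again every $\cH$ occurring later is locally free, so nothing is lost.
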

\begin{proof}
Since codimension of $\cP-\cU$ is at least two, by Hartog's theorem, we have the equality:
$$
H^0(\cU,\cE)\,=\, H^0(\cP,\cE).
$$
This gives us,
$$
H^0(\cP,\cE)= H^0(M,q_*\cE).
$$
Since $q$ is a proper morphism with connected fibres, 
when $\cE=q^*\cH$, by projection formula, we have $q_*q^*\cH\,=\, \cH$.
This gives the claim.

\end{proof}

%Regarding the fibration $q'$ we note the following.

%\begin{lemma}\label{etaletrivial}
%There exists \'etale open sets $p_\alpha:U_\alpha \rar M$, satisfying $\cup_{\alpha\in %I}p_\alpha(U_\alpha)=M$, such that the pullback bundle $\cP_{U_\alpha}\rar U_\alpha$ is a Zariski trivial %fibration. In other words, the fibration $q': \cP\rar M'$ is \'etale locally trivial.
%\end{lemma}
%\begin{proof}
%We need to note that the formal deformations of a rational homogeneous variety $Z$ are trivial. This is a %consequence of the well-known Bott's vanishing theorem: $H^1(Z, T_Z)=0$. Since a Grassmannian is a %rational homogeneous variety, the assertion on \'etale local triviality follows from \cite[Proposition %2.6.10]{Sernesi}.
%\end{proof} 

%%%%%%%%%%%%%%%%%%%%%%%%%%%%%%%%%%%%%%%%%%%%%%%%%%%%%%%%%%%%%%%%%%%%%%%%%%%%%%%%%%%%%%%

\section{Stability of tangent bundle  under Hecke correspondence: $r\geq 3$}

%%%%%%%%%%%%%%%%%%%%%%%%%%%%%%%%%%%%%%%%%%%%%%%%%%%%%%%%%%%%%%%%%%%%%

In this subsection, we prove that the stability of the tangent bundle is preserved under the Hecke correspondence. This will help us to conclude the stability of the tangent bundle for any $\cSU_C(r,d)$, when $(r,d)=1$, in the final section.

Recall the Hecke correspondence from the previous section, and the choice of $\cU\subset \cP$,  such that $q':\cU\rar M'$ is a morphism, containing a generic fibre of $q$ and $q'$,  and $codim(\cP-\cU)\geq 2$.

Recall from \eqref{slopecondition} that stability of $\Omega_X$ or $T_X$ (here $(X,\cL):= (M,L) \m{ or } (M',L')$) will hold if
there does not exist a coherent subsheaf
$$
\cF\subset T_X
$$
of rank $p\leq \f{N}{2}$ and $det(\cF)=L$.

Consider the exact sequence of tangent sheaves on $\cU$ :
\begin{equation}\label{tanbundle}
0\rar T_{\cP/M}\rar T_\cP \rar q^*T_M\rar 0.
\end{equation}
A similar exact sequence corresponds to the fibration $q'$.

We now look at certain sheaf sequences to relate $T_{\cP/M}$ and $T_{\cP/M'}$ inside $T_\cP$.

Consider the product variety $M\times M'$ with the projections $l:M\times M'\rar M$, $l':M\times M'\rar M'$.
The inclusion in the following lemma was pointed out by C. Simpson.

\begin{lemma}\label{sheafsurjective}
There is a map:
$$
\gamma: \cP\rar M\times M'
$$
defined on $\cU$, and compatible with the projections $l,l',q,q'$. The map $\gamma$ is injective on $\cU$.
The natural  sheaf map:
$$
\gamma^*(\Omega_{M\times M'}=l^*\Omega_M \oplus l'^*\Omega_{M'} )\rar \Omega_\cP
$$
 is generically surjective. 
\end{lemma}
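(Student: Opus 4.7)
The plan is to take $\gamma := (q, q') : \cU \rar M \times M'$. Since $q$ is everywhere regular and, by Lemma \ref{choiceU}, $q'$ is regular precisely on $\cU$, this map is defined on $\cU$. Compatibility with the projections $l, l'$ on $M \times M'$ is then tautological: $l \circ \gamma = q$ and $l' \circ \gamma = q'$ hold by construction.

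For the injectivity of $\gamma$ on $\cU$, I would use the moduli-theoretic description of the Hecke correspondence. A point of $\cP$ encodes a stable bundle $E \in M$ together with a Hecke modification $E' \hookrightarrow E$ in which the quotient is torsion of length $h$ supported at the distinguished point of $C$ built into the restricted Poincar\'e bundle; under this identification $\gamma$ sends $(E, E' \hookrightarrow E)$ to $(E, E')$. To recover the point of $\cP$ from the pair $(E, E')$ one uses that $E$ and $E'$ are both stable, hence simple, so any two inclusions $E' \hookrightarrow E$ with isomorphic quotients differ by $\C^*$-automorphisms of $E$ and $E'$, which act trivially on the Grassmannian data parametrized by $\cP$. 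This is the injectivity statement that C. Simpson is credited with in the acknowledgments.

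Once injectivity is established, the generic surjectivity of the cotangent map is a soft consequence. Since $\gamma: \cU \rar M \times M'$ is set-theoretically injective, its image has dimension equal to $\dim \cU = N'$. In characteristic zero, a dominant morphism between irreducible varieties of the same dimension is generically \'etale; combined with injectivity, this forces $\gamma$ to be birational onto its image, so the differential $d\gamma_x : T_x \cP \rar T_{\gamma(x)}(M \times M')$ is injective on a dense open $\cU^0 \subset \cU$. Dualizing this fiberwise injection on $\cU^0$ yields a fiberwise surjection $\gamma^* \Omega_{M \times M'} \rar \Omega_\cP$, which is exactly the asserted generic surjectivity of the sheaf map.

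The main obstacle in this plan is the injectivity assertion. The existence of $\gamma$ and the reduction of the cotangent claim to injectivity are formal, but pinning down set-theoretic injectivity requires unpacking the moduli-theoretic construction of $\cP$ from \cite{Beauville} and exploiting the simplicity of the underlying stable bundles carefully --- which is presumably why the authors credit this step to Simpson.
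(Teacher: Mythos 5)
Your construction of $\gamma=(q,q')$, the reduction of generic surjectivity of $\gamma^*\Omega_{M\times M'}\rar\Omega_\cP$ to injectivity of the tangent map, and the identification of injectivity as the crux all match the paper. The genuine gap is in your justification of injectivity. You claim that because $E$ and $E'$ are simple, ``any two inclusions $E'\hookrightarrow E$ with isomorphic quotients differ by $\C^*$-automorphisms of $E$ and $E'$.'' Simplicity only gives $\mathrm{Aut}(E)=\mathrm{Aut}(E')=\C^*$; it says nothing about the dimension of $\mathrm{Hom}(E',E)$, and a priori there could be two inclusions $\iota_1,\iota_2\colon E'\hookrightarrow E$, both with torsion quotient of length $h$ supported at the marked point, whose images are \emph{different} subsheaves of $E$ --- that is, two distinct points of the Grassmannian fibre $G$ of $q$ mapping to the same point $[E']\in M'$. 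Ruling this out is precisely the injectivity of $q'$ restricted to a fibre of $q$, which is the nontrivial geometric input here; the paper does not reprove it but imports it from the proof of \cite[Lemma 10.3]{Beauville}, where $q'|_{\{u\}\times G}$ is shown to be an embedding (with $q'^*L'|_G=\cO_G(r)$). As written, your argument assumes the very point at issue, and the hedge at the end (``requires unpacking \cite{Beauville} carefully'') does not repair the incorrect deduction from simplicity.

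Granting fibrewise injectivity of $q'$, the remainder of your argument is correct and routes slightly differently from the paper: you pass from set-theoretic injectivity of $\gamma$ to generic immersivity via generic \'etaleness in characteristic zero, whereas the paper works with the tangent map $T_\cP\rar T_M\oplus T_{M'}$ directly, checking injectivity on the vertical subbundle $T_{\cP/M}=T_G$ (where $dq$ vanishes and $q'|_G$ is immersive) and letting $dq$ handle the rest. Both derivations are fine once the fibrewise statement is in hand; neither works without it.
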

\begin{proof}
The existence of the map $\gamma$ and compatibility with projections, follows from universality of products over $Spec(\C)$.
More concretely, on $\cU$, $\gamma$ is given as $u \mapsto (q(u),q'(u))$.

To show that the natural sheaf map $\gamma^*(l^*\Omega_M\oplus l'^*\Omega_{M'})\rar \Omega_\cP$ is generically surjective, it suffices to show that the dual map on tangent spaces :
$$
T_\cP \rar T_{M}\oplus T_{M'},
$$
is injective.

This map restricted to $G$ is injective, since $q':\cU\rar M'$ restricted to $\{u\}\times G$ is injective, for any $u\in M $, whenever $q'$ is defined (see proof of \cite[Lemma 10.3]{Beauville}). In particular, 
$$
\gamma:\cU\rar M\times q'(\cU)
$$ 
is injective. 
\end{proof}

A dimension count shows that the image of $\cU$ under the map $\gamma$ has dimension strictly smaller than the product $M\times M'$.
We also note the following consequence of the inclusion $\cU\hookrightarrow M\times M'$.

\begin{lemma}\label{sheafiden}
There is a (generically) surjective map of sheaves on $\cU$:
$$
\Omega_\cU \rar \Omega_{\cP/M'}\oplus \Omega_{\cP/M}.
$$ 
In particular, there is a (generically) surjective map of sheaves, compatible with direct sums:
$$
q^*\Omega_M\oplus q'^*\Omega_{M'}\rar \Omega_{\cP/M'}\oplus \Omega_{\cP/M}.
$$
\end{lemma}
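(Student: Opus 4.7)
The plan is to construct the map $\Omega_\cU\to \Omega_{\cP/M'}\oplus \Omega_{\cP/M}$ as the direct sum of the two natural quotients coming from the relative cotangent sequences of $q$ and $q'$ on $\cU$, and then to deduce the ``in particular'' statement by precomposing with the generically surjective map furnished by Lemma \ref{sheafsurjective}.

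First, on $\cU$ (where $q'$ restricts to a morphism), the two relative cotangent sequences
$$q^*\Omega_M\to \Omega_\cU\to \Omega_{\cP/M}\to 0,\qquad q'^*\Omega_{M'}\to \Omega_\cU\to \Omega_{\cP/M'}\to 0$$
give two surjections whose direct sum I take as the desired map $\varphi:\Omega_\cU\to \Omega_{\cP/M'}\oplus \Omega_{\cP/M}$. To verify generic surjectivity of $\varphi$ I would dualize: the dual is the map $T_{\cP/M'}\oplus T_{\cP/M}\to T_\cU$ built from the two inclusions of vertical tangent sheaves, and its kernel at a point $u$ equals $T_{\cP/M',u}\cap T_{\cP/M,u}$, which in turn equals the kernel of the combined differential $(dq,dq')_u:T_{\cU,u}\to T_{M,q(u)}\oplus T_{M',q'(u)}$. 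Lemma \ref{sheafsurjective} supplies generic injectivity of this combined differential on $\cU$ (dual to the generic surjectivity of $\gamma^*(l^*\Omega_M\oplus l'^*\Omega_{M'})\to \Omega_\cP$), so the two vertical subsheaves are transverse at a generic point; dualizing back yields generic surjectivity of $\varphi$.

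For the second displayed assertion, I would precompose $\varphi$ with the generically surjective map $q^*\Omega_M\oplus q'^*\Omega_{M'}\to \Omega_\cU$ from Lemma \ref{sheafsurjective}; the composite is generically surjective into $\Omega_{\cP/M'}\oplus \Omega_{\cP/M}$. Its block-diagonal shape --- the content of ``compatible with direct sums'' --- is immediate: by exactness of the relative cotangent sequence for $q$, the composition $q^*\Omega_M\to \Omega_\cU\to \Omega_{\cP/M}$ vanishes, so $q^*\Omega_M$ lands only in the first summand $\Omega_{\cP/M'}$, and symmetrically $q'^*\Omega_{M'}$ lands only in the second summand $\Omega_{\cP/M}$. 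The only substantive point in the whole argument is the generic transversality $T_{\cP/M}\cap T_{\cP/M'}=0$, which is already contained in Lemma \ref{sheafsurjective}; everything else is a formal diagram chase with the relative cotangent sequences, so I anticipate no further obstacle.
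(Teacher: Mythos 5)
Your proof is correct and follows essentially the same route as the paper: both arguments reduce the statement to the generic injectivity of $T_{\cP/M'}\oplus T_{\cP/M}\rar T_\cP$, which follows from the generic injectivity of $(dq,dq'):T_\cP\rar q^*T_M\oplus q'^*T_{M'}$ supplied by Lemma \ref{sheafsurjective} (the paper phrases this as the identifications $T_{\cP/M}=q'^*T_{M'}\cap T_\cP$ and $T_{\cP/M'}=q^*T_M\cap T_\cP$ inside the direct sum, which is the same transversality statement), and then dualizes. Your explicit verification of the block-diagonal shape via the vanishing of $q^*\Omega_M\rar\Omega_{\cP/M}$ is a slightly more detailed account of what the paper leaves as ``follows from the above arguments.''
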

\begin{proof}
Using Lemma \ref{sheafsurjective}, we have the following inclusion of sheaves on $\cU$:
$$
 (q^*T_M \cap T_\cP) \oplus (q'^*T_{M'} \cap T_\cP)\hookrightarrow T_\cP \hookrightarrow q^*T_M \oplus q'^*T_{M'}. 
$$
Consider the exact sequence of tangent sheaves on $\cU$ associated to $\cP\rar M$ (and a similar one associated to $\cP\rar M'$):
$$
0\rar T_{\cP/M} \rar T_\cP \rar q^*T_M \rar 0.
$$
Now consider the exact sequence:
\begin{eqnarray}\label{splitseq}
 0\rar q'^*T_{M'} & \rar q^*T_M \oplus q'^*T_{M'} \rar & q^*T_{M} \rar 0\\
  \cup         & \cup  & = \\
0\rar q'^*T_{M'} \cap T_\cP & \rar\,\, T_\cP\,\, \rar & q^*T_{M}\rar 0. 
\end{eqnarray}

We deduce that (inside $q^*T_M \oplus q'^*T_{M'}$):
\begin{equation}\label{cap1}
 q'^*T_{M'} \cap T_\cP =T_{\cP/M}
\end{equation}
and similarly
\begin{equation}\label{cap2}
q^*T_M \cap T_\cP =T_{\cP/M'}.
\end{equation}
This gives the inclusion of sheaves on $\cU$:
\begin{equation}\label{suminclusion}
T_{\cP/M'}\oplus T_{\cP/M} \hookrightarrow T_\cP. 
\end{equation}
Since the sheaves are locally free, dualizing we get a (generically) surjective map of sheaves on $\cU$:
$$
\Omega_\cP \rar \Omega_{\cP/M'}\oplus \Omega_{\cP/M}.
$$
The second assertion in the lemma follows from the above arguments.
\end{proof}

Consider the exact sequence of tangent sheaves on $\cU$:
$$
0\rar T_{\cP/M'} \rar T_\cP \sta{\eta}{\rar} q'^*T_{M'}\rar 0.
$$
We now note the following lemma on the inverse image of a subsheaf in $q'^*T_{M'}$.

\begin{lemma}\label{sum+inclusion}
Suppose $\cF'\subset T_{M'}$ is a coherent subsheaf on $M'$. Assume that $T_{\cP/M}\subset q'^*\cF'$. Then the inverse image $\eta^{-1}(q'^*\cF')\subset T_\cP$ contains the relative tangent sheaves $T_{\cP/M}$ and $T_{\cP/M'}$.
\end{lemma}
\begin{proof}
Clearly the sheaf $\eta^{-1}(q'^*\cF')$ contains the sheaf $T_{\cP/M'}$.
We need to show the inclusion 
$$
T_{\cP/M}\subset \eta^{-1}(q'^*\cF').
$$
We use the two exact sequences in \eqref{splitseq} (replacing $M$ by $M'$):
\begin{eqnarray*}
 0\rar q^*T_{M} & \rar q^*T_M \oplus q'^*T_{M'} \sta{\delta}{\rar} & q'^*T_{M'} \rar 0\\
  \cup         & \cup\,d  & = \\
0\rar q^*T_{M} \cap T_\cP & \rar\,\, T_\cP\,\, \rar & q^*T_{M'}\rar 0. 
\end{eqnarray*}
Here $d$ is the inclusion map and $\delta$ is the projection to the second factor.
Then we can write $\eta= \delta \circ d$. Hence we have:
\begin{eqnarray*}
\eta^{-1}(q'^*\cF')&=& d^{-1}(\delta^{-1}(q'^*\cF'))\\
           &=&(q^*T_M\oplus q'^*\cF') \cap T_\cP,         
\end{eqnarray*}
as a subsheaf of $T_{\cP}\subset q^*T_M\oplus q'^*T_{M'}$.

This means that we have the inclusion:
\begin{equation}\label{suminclusion}
(q^*T_M\cap T_\cP) \oplus (q'^*\cF' \cap T_\cP) \,\subset\, \eta^{-1}(q'^*\cF').
\end{equation}
Using \eqref{cap1} and \eqref{cap2}, the first direct summand is $T_{\cP/M'}$, and the second direct summand satisfies
 $$
 q'^*\cF' \cap T_\cP\subset   q'^*T_{M'}\cap T_\cP = T_{\cP/M}.
 $$ 
 Since we assume that $T_{\cP/M}\subset q'^*\cF'$, we deduce the equality $q'^*\cF' \cap T_\cP=T_{\cP/M}$.
Hence, \eqref{suminclusion} implies that both the relative tangent sheaves are contained in $\eta^{-1}(q'^*\cF')$.

\end{proof}   

\begin{lemma}\label{crosssection}
There is a subscheme $W"\subset \cP$ and an open subscheme $U"\subset M$ such that $q:W"\rar U"$ is birational, and $W"$ contains points of codimension one (of its closure). If $Z\subset \cP$ is a closed subset of codimension at least two, then we also have $W"\cap Z=\emptyset$. Furthermore, there is a Grassmannian $G'$, which is a fibre of $q'$, such that  $G'_{W"}:=G'\cap W" \subset G'$ is a subset containing points of codimension one.
\end{lemma}
\begin{proof}
Since $q:\cP\rar M$ is a Zariski locally trivial fibration (see Lemma \ref{Zariskitrivial}), there is a section $s:M\rar \cP$ defined over an open subset $U\subset M$. We assume that the image of the section $s$ does not intersect the generic point of $Z$. Hence the section extends outside a codimension two subset of $M$. This means that there is a closed subvariety $W\subset \cP$ together with a finite morphism $W\rar M$ and which is birational on an open subscheme $W'\subset W$ and containing points of codimension one. In other words, there is an open subset $U"\subset M$ such that $codim (M-U")\geq 2$ and $W":=(W'-Z) \rar U"$ is birational. (This argument is due to L. Morel-Bailly).

Now we make a choice of $s$ to prove the second assertion.
We use the Zariski trivialization $q:U\times G \rar U$ as above. We also know that
$q: G' \rar M$ is an embedding, where $G'\subset \cU$ is a generic fibre of $q'$ (use Lemma \ref{choiceU}).
Denote $G'_o:= U\cap q(G')\subset U$. Then we have $q^{-1}(G'_o)= G'_o \times G\subset (U\times G)$. 
Now choose a section $s:U\rar U\times G$, such that $s(G'_o) = G'_o \times z$, for a $z\in G$. Now we extend $s$ over codimension one points by above procedure, to get a birational morphism $q: W"\rar U"$. It restricts to a birational morphism $G'_{W"} \rar G'_{U"} \subset U"$. Here $G'_{W"}\subset W"$ is the image of $s:G'_o \rar W"$ extended over a subscheme $G'_{U"}\subset U"$.  Since $W"$ contains codimension one points, the image $s(G'_{U"})=G'_{W"}\subset G'$ also contains codimension one points of $G'$.
\end{proof}

\begin{proposition}\label{heckestable}
Suppose $\cF'\subset T_{M'}$ is a coherent subsheaf of rank $p$ and $det(\cF')=L'$. Then it corresponds (under Hecke) to a coherent subsheaf $\tilde{\cF}\subset T_M$ of rank $p$ and $det(\tilde{\cF})=L$. In other words, a destabilizing subsheaf of $T_{M'}$ corresponds to a destabilizing subsheaf of $T_M$.
Similar statement holds, replacing $M$ by $M'$, $M'$ by $M$ and $L'$ by $L$.
\end{proposition}
\begin{proof}
Suppose $\cF'\subset T_{M'}$ is a coherent subsheaf of rank $p$ and $det(\cF')=L'$.
 
Consider the exact sequence of tangent sheaves on $\cU$:
$$
0\rar T_{\cP/M'} \rar T_\cP \sta{\eta}{\rar} q'^*T_{M'}\rar 0.
$$
Using \cite[proof of Lemma 10.3]{Beauville},  we notice that there is an inclusion of the sheaf $T_{\cP/M}\hookrightarrow q'^*T_{M'}$ on $\cU$.
It is well-known that the tangent bundle of the generic fibre $G$ of $q$ (which is a Grassmannian) is stable, with respect to $L'_{|G}=K_G^{-1}$.
Hence we assume that $\cF'\subset T_{M'}$ is a maximal destabilizing torsion free subsheaf and we have an inclusion 
\begin{equation}\label{Gincl}
T_{\cP/M}\subset q'^*\cF'.
\end{equation}

Consider the inverse sheaf $\eta^{-1}(q'^*\cF') \subset T_\cP$. Then this sheaf fits in an exact sequence:
$$
0\rar T_{\cP/M'}\rar \eta^{-1}(q'^*\cF') \sta{\eta}{\rar} q'^*\cF' \rar 0.
$$
Hence  $\eta^{-1}(q'^*\cF')$ is of rank $p+m$ (here $m:=dim G(h,r)$) and, by \eqref{Gincl} and Lemma \ref{sum+inclusion}, it contains both the sheaves $T_{\cP/M'}$ and $T_{\cP/M}$.

Consider the tangent exact sequence associated to $\cP\rar M$:
$$
0\rar T_{\cP/M}\rar T_{\cP} \sta{\beta}{\rar} q^*T_M \rar 0
$$
and the exact sequence associated to the projection $\eta^{-1}(q'^*\cF') \rar   q^*T_M$:
\begin{equation}\label{torfree}
0\rar T_{\cP/M}\rar \eta^{-1}(q'^*\cF') \rar \cF \rar 0.
\end{equation}
Here $\cF:=\beta(\eta^{-1}(q'^*\cF')) \subset q^*T_M$.

Hence rank $(\cF)=p$. We note the determinants of the above sheaves.

We have $det(T_{\cP/M'})=det(T_P)\otimes det(q'^*T_{M'})^{-1}= q^*L\otimes q'^*L'^{-1}$. Similarly $det(T_{\cP/M})=q'^*L'\otimes q^*L^{-1}$. Hence $det(\eta^{-1}(q'^*\cF'))=q^*L$ on $\cU$.
Since the sheaves in \eqref{torfree} are torsion free, they are locally free outside a codimension two subset $Z\subset \cU$.

By Lemma \ref{crosssection}, we can choose a section $s:U"\rar \cU$ such that $codim(M-U")\geq 2$ and there is a subvariety $W"\subset \cU$ such that the restriction of $q$ to $W"\rar U"$ is birational. 

We now pullback the exact sequence \eqref{torfree} on $U"$, via $s$, to get a short exact sequence of locally free sheaves on $U"$ :
\begin{equation*}
0\rar s^*T_{\cP/M} \rar s^*\eta^{-1}(q'^*\cF') \rar s^*\cF \rar 0.
\end{equation*}

Since $\m{Pic}(M)=\m{Pic}(U")=\Z.L$,  $det(s^*T_{\cP/M})=s^*(q'^*L'\otimes q^*L^{-1})=L^a$, for some $a\in \Z$.

We claim that $a=0$.

Suppose $a>0$.  In other words, $s^*(q'^*L'\otimes q^*L^{-1})$ is ample on $U"$. Next, using the second assertion of Lemma \ref{crosssection}, there is a fibre $G'$ of $q'$, such that $G'_{W"}\subset W"$, and $G'_{W"}\subset G'$ is a subscheme containing points of codimension one. 

 We note that $q'^*L'\otimes q^*L^{-1}$ restricted on the fibre $G'$ of $q'$ is $q^*L^{-1}$, which is not ample. Since the map $s$ is birational, the restriction
satisfies:
\begin{eqnarray*}
s^*(q'^*L'\otimes q^*L^{-1})_{| G'_{W"}} & = & s^*((q'^*L'\otimes q^*L^{-1})_{| G'_{W"}})\\
                                    & = & s^*(q^*L^{-1}) \\
                                    & = & L^{-1}.
\end{eqnarray*}
                                       
This means that the pullback $s^*(q'^*L'\otimes q^*L^{-1} )=L^a$ is not ample on $U"$. This is a contradiction to $a>0$. Hence $a\leq 0$.

Suppose $a<0$.  
Then the determinants of the sheaves in \eqref{torfree} pulled back on $U"$, via $s$, satisfy:
$$
s^*det(\eta^{-1}(q'^*\cF'))=s^*det (\cF)\otimes L^{-a}.
$$
  The  coherent subsheaf $s^*\cF\subset  T_{U"}$ is of rank $p$, on $U"$. Take a coherent extension $\tilde{\cF}\subset T_M$ on $M$ (see \cite[Ex.5.19 d), Chap.II]{Hartshorne}). Since codim$(M-U")\geq 2$ and $det(\eta^{-1}(q'^*\cF'))=q^*L$,   we deduce that $det(\tilde{\cF})=L^{1-a}$ on $M$, where $a<0$. Hence, it induces a nonzero section in
$H^0(M, \bigwedge^p T_M\otimes L^{a-1})= H^0(M,\Omega^{N-p}_M\otimes L^{1+a})$, since $K_M=L^{-2}$. However, this group is zero when $1+a< 0$, using Kodaira-Akizuki-Nakano theorem and when $1+a=0$, using rational connectedness of $M$.

Hence we conclude that $a=0$. In other words, we deduce that $det(\tilde{\cF})=L$.  
Since $p\leq \f{N}{2}$, $\tilde{\cF}$ is a destabilizing subsheaf of $T_M$ (see Lemma \ref{rem2}).

Above arguments also hold replacing $M$ by $M'$ and $M'$ by $M$, since we only need to note that $q'$ is defined on a generic fibre of $q$ (see proof of \cite[Lemma 10.3]{Beauville}).  Since $Pic(M')=Pic(q'(\cU))$, we can conclude by similar arguments as above.
\end{proof}

%\begin{remark}\label{ranktwo}
%When the rank $r=2$, we can use Thaddeus's theorem \cite{Thaddeus}, which gives a flip-flop sequence (given by moduli space of pairs of a vector bundle bundle with a section) relating the projective space $\p^N\leftrightarrow \cSU_C(2,1)$. In other words, outside a codimension two subset, there is an isomorphism between the moduli space and the projective space. Since the tangent bundle of a projective space is stable,  and the slope inequality can be calculated outside a codimension two subset (see Lemma \ref{rem2}), the stability of $T_M$ holds on the moduli space $M:=\cSU_C(2,1)$.
%\end{remark}

%%%%%%%%%%%%%%%%%%%%%%%%%%%%%%%%%%%%%%%%%%%%%%%%%%%%%%%%%%%%%%%%%%%%%%%%%%%%%%%%%%%%%%%%%%%%%%%%%%%%%%%%%%%%

\section{Stability of the tangent bundle $T_M$: Main Theorem}
%%%%%%%%%%%%%%%%%%%%%%%%%%%%%%%%%%%%%%%%%%%%%%%%%%%%%%%%%%%%%%%%%%%%%%%%%%%%%%%%%%%%%%%%%%%%%%%%%%%%%%%%%%%
Recall the Hecke correspondence and notations, from \S \ref{Heckecorr}:

\begin{eqnarray*}
 \cP &\sta{q'}{ \rar} & M'\\
   \downarrow q && \\
   M. &&
   \end{eqnarray*}
Here $q'$ is a rational map and there is a subset $\cU\subset \cP$ where $q'$ is defined and such that $codim(\cP-\cU)\geq 2$. 
(The choice of $\cU$ is made in  Lemma \ref{choiceU}, with $codim(\cP-\cU)\geq 2$ and $\cU$ contains a generic fibre of $q$ and $q'$).

 In the rest of the  proofs, we will consider sheaves on $\cU\subset \cP$.

 %%%%%%%%%%%%%%%%%%%%%%%%%%%%%%%%%%%%%%%%%%%%%%%%%%%%%%%%%%%%%%%%%%%%%%%%%%%%%%%%%%%%
 
\subsection{Main theorem}

Now we proceed to show:

\begin{theorem}\label{finaltheorem}
The cotangent bundle $\Omega_M$ of the moduli space $M= \cSU_C(r,d)$ where $(r,d)=1$ and $r\geq 3$ is always stable.
\end{theorem}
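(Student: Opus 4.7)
The plan is to argue by contradiction. Assume $\Omega_M$ is not stable; by Lemma \ref{slopek} applied via duality to $T_M$, there is a destabilizing subsheaf $\cF\subset T_M$ of rank $p$ with $\det\cF=L$, and Corollary \ref{heckecor} rules out $p=1$, so $2\leq p\leq N/2$. Applying Corollary \ref{heckestable} — whose two directions together give the equivalence ``$T_M$ stable $\Leftrightarrow$ $T_{M'}$ stable'', and hence the contrapositive ``$T_M$ non-stable $\Leftrightarrow$ $T_{M'}$ non-stable'' for a Hecke partner $M'=\cSU_C(r,1-h)$ — yields a parallel destabilizing subsheaf $\cF'\subset T_{M'}$ of rank $p'$ with $\det\cF'=L'$ and $2\leq p'\leq N/2$. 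The remainder of the argument will derive a contradiction from the simultaneous existence of $\cF$ and $\cF'$.

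Next I would combine these on $\cU\subset\cP$ (with $\m{codim}(\cP\setminus\cU)\geq 2$ by Lemma \ref{choiceU}). Using the injection $T_\cP\hookrightarrow q^*T_M\oplus q'^*T_{M'}$ of Lemma \ref{sheafsurjective}, define
\[
T_\cP^{\cF}:=T_\cP\cap\bigl(q^*\cF\oplus q'^*T_{M'}\bigr),\qquad T_\cP^{\cF'}:=T_\cP\cap\bigl(q^*T_M\oplus q'^*\cF'\bigr),
\]
fitting into $0\to T_{\cP/M}\to T_\cP^{\cF}\to q^*\cF\to 0$ and the analogue for $T_\cP^{\cF'}$. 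From \eqref{canclassP} one has $K_{\cP/M}^{-1}=q^*L^{-1}\otimes q'^*L'$, so the determinants are $\det T_\cP^{\cF}=q'^*L'$ and $\det T_\cP^{\cF'}=q^*L$. Dually, via Lemma \ref{sheafiden}, the induced map $q^*\cS\oplus q'^*\cS'\to\Omega_{\cP/M'}\oplus\Omega_{\cP/M}$ (for the dual destabilizing subsheaves $\cS\subset\Omega_M$, $\cS'\subset\Omega_{M'}$) organizes the cotangent-side data and is the ``sum inside the direct sum of relative cotangent sheaves'' referred to in the introduction.

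The crux is the short exact sequence
\[
0\to T_\cP^{\cF}\cap T_\cP^{\cF'}\to T_\cP^{\cF}\oplus T_\cP^{\cF'}\to T_\cP^{\cF}+T_\cP^{\cF'}\to 0
\]
on $\cU$. Additivity of determinants gives $\det(T_\cP^{\cF}\cap T_\cP^{\cF'})\otimes\det(T_\cP^{\cF}+T_\cP^{\cF'})=q^*L\otimes q'^*L'=K_\cP^{-1}$, and since $T_\cP^{\cF}+T_\cP^{\cF'}\subset T_\cP$ is a subsheaf of rank at most $N+m$, this identity forces a nontrivial section of some $\bigwedge^a T_\cP\otimes q^*L^{-b}\otimes q'^*L'^{-c}$ on $\cU$ with $b,c\geq 0$. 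I would then filter $\bigwedge^a T_\cP$ through $0\to T_{\cP/M}\to T_\cP\to q^*T_M\to 0$ (and the $q'$-analogue) exactly as in the proof of Proposition \ref{hecke}, and use the combination of Kodaira vanishing on the Grassmannian fibers, Le Potier vanishing for $H^1(G,\bigwedge^iT_G)$, and rational connectedness of $G$ to kill each irrelevant graded piece. What survives, pushed forward via Lemma \ref{proformula}, is a section in $H^0(M,\bigwedge^{a'}T_M\otimes L^{-b'})$ or its $M'$-analogue, which must vanish either by Akizuki-Nakano (for $b'$ sufficiently negative), by rational connectedness of $M$ (for trivial twist), or by the slope comparison $\mu(L)>\mu(\bigwedge^{a'}T_M)$ closing the loop against the originally given $\cF$ and $\cF'$.

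The hard part will be the case distinction according to whether $T_\cP^{\cF}+T_\cP^{\cF'}=T_\cP$ or is a proper subsheaf, and correspondingly tracking the rank and determinant of $T_\cP^{\cF}\cap T_\cP^{\cF'}$ through each alternative — the former gives $\det(T_\cP^{\cF}\cap T_\cP^{\cF'})$ trivial and forces a section concentrated on the intersection of rank $m+p+p'-N$, while the latter yields a proper subsheaf of $T_\cP$ whose determinant properly divides $K_\cP^{-1}$ and whose behavior on the Grassmannian fibers must be analyzed separately. Ensuring that the extension of sections from $\cU$ to $\cP$ (via $\m{codim}(\cP\setminus\cU)\geq 2$, Hartogs' theorem, and Lemma \ref{rem2}/Lemma \ref{proformula}) and the eventual vanishing on $M$ or $M'$ both hold in each case, while consistently using the symmetry of the Hecke correspondence established by Lemmas \ref{sheafsurjective} and \ref{sheafiden}, is the most delicate part of the argument.
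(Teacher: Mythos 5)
Your opening moves coincide with the paper's: assume non-stability, invoke Corollary \ref{heckestable} to produce destabilizing subsheaves on both $M$ and $M'$, and combine them on $\cU\subset\cP$. You even name the paper's key object in passing (the sum map $q^*\cS\oplus q'^*\cS'\to\Omega_{\cP/M'}\oplus\Omega_{\cP/M}$), but your actual argument pivots away from it, and that is where the proof breaks. The paper takes the sum of the two pulled-back cotangent-side subsheaves inside $\Omega_{\cP/M'}\oplus\Omega_{\cP/M}$, whose rank is only $2\dim G(h,r)$; since the destabilizing subsheaves of $\Omega_M,\Omega_{M'}$ have ranks $p,p'\geq N/2$, the kernel $\cK$ of the sum map is \emph{forced} to have positive rank, and the rest of the proof consists of showing $\det\cK=\cO$ and that the resulting section lands in $H^0(M,\Omega^s_M)=0$. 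Your construction instead takes sum and intersection of $T_\cP^{\cF}$ and $T_\cP^{\cF'}$ inside $T_\cP$, of rank $N+m$: here $\mathrm{rank}(T_\cP^{\cF})+\mathrm{rank}(T_\cP^{\cF'})=2m+p+p'$ with $p,p'$ possibly as small as $2$, so nothing forces $T_\cP^{\cF}\cap T_\cP^{\cF'}$ to be nonzero, and when it vanishes your determinant identity degenerates to $\det(T_\cP^{\cF}+T_\cP^{\cF'})=K_\cP^{-1}$, producing no negatively twisted section at all. The claim that the identity ``forces a nontrivial section of some $\bigwedge^aT_\cP\otimes q^*L^{-b}\otimes q'^*L'^{-c}$ with $b,c\geq 0$'' is therefore unsupported; controlling the signs of these exponents is precisely the hard content (the paper achieves it by exhibiting the pieces $K,K'$ of its kernel as subsheaves of $q^*\Omega_M\cap q'^*\Omega_{M'}$ and restricting to generic fibres of \emph{both} $q$ and $q'$; your intersection sheaf has no such double control).

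The second, independent gap is in your terminal vanishing. A surviving section of $\bigwedge^{a'}T_M\otimes L^{-b'}=\Omega_M^{N-a'}\otimes L^{2-b'}$ is killed by Akizuki--Nakano only when $b'\geq 3$ and by rational connectedness only when $b'=2$; for $b'\in\{0,1\}$ neither applies (indeed $H^0(M,\bigwedge^{N}T_M)=H^0(M,L^{2})\neq 0$), and your fallback slope comparison $\mu(L)>\mu(\bigwedge^{a'}T_M)$ is circular, since it requires semistability of $\bigwedge^{a'}T_M$ and hence of $T_M$ --- exactly what you have assumed to fail. This is why the paper labors to prove $\det\cK=\cO$ and then $a_1=b_1=0$: it arranges for the final section to sit in the \emph{untwisted} group $H^0(M,\Omega^s_M)$, which vanishes by rational connectedness alone. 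As sketched, each branch of your case analysis terminates in one of the uncontrolled groups above, so no contradiction is reached.
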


We start by assuming to the contrary, and gather the consequences in this subsection. The next subsection will analyse the possible cases which will be ruled out, using rational connectedness and Kodaira-Akizuki-Nakano theorem.

Using Proposition \ref{heckestable}, we assume that both $\Omega_M$ and $\Omega_{M'}$ are not stable.
   
Suppose $\cF\subset \Omega_M$ (resp. $\cG\subset \Omega_{M'}$) is a coherent subsheaf destabilizing $\Omega_M$ (resp. $\Omega_{M'}$). Then we have seen in \S \ref{descent} that we have the following only possibilities  
\begin{equation}\label{detL}
det(\cF)=L^{-1},\, det(\cG)=L'^{-1}
\end{equation}
and 
\begin{equation}\label{detrank}
\m{rank}(\cF)\,=\,p\geq \f{N}{2},\, \m{rank}(\cG)\,=\,p'\geq \f{N}{2}.
\end{equation}   

Consider the product variety $M\times M'$ with the projections $l:M\times M'\rar M$, $l':M\times M'\rar M'$, and compatible with $q,q'$ as in Lemma \ref{sheafsurjective}, together with the  injective map 
$$
\gamma:\cU\rar M\times q'(\cU).
$$

Consider the map of sheaves, on $M\times q'(\cU)$ (use Lemma \ref{sheafiden}):
$$
\eta: l^*\cF \oplus l'^*\cG \hookrightarrow l^*\Omega_{M} \oplus l'^*\Omega_{M'} \rar \gamma_*\Omega_\cU  \rar \gamma_*(\Omega_{\cP/M'}\oplus \Omega_{\cP/M}).
$$
The composed map is taking sums inside $\gamma_*(\Omega_{\cP/M'}\oplus \Omega_{\cP/M})$.

This gives a left exact sequence on $M\times q'(\cU)$ :
\begin{equation}\label{exeexact}
0\rar \tilde{\cK} \rar l^*\cF \oplus l'^*\cG  \rar \gamma_*(\Omega_{\cP/M'}\oplus \Omega_{\cP/M}).
\end{equation}

Here $\tilde\cK:= \m{ kernel }(\eta)$. Note that the image of $\eta$ is supported on $\gamma(\cU)$, so the image has rank zero, and  hence
\begin{equation}\label{rankKtilde}
rank(\tilde\cK)=rank(l^*\cF \oplus l'^*\cG).
\end{equation}

We note the following lemma, which we will use.
\begin{lemma}\label{inversef}
There is a commutative diagram on $M\times q'(\cU)\subset M\times M'$:
\begin{eqnarray*}
     0 & 0 &   \\
   \downarrow & \downarrow & \\ 
0\rar \cT &\rar \cB \rar &  \\
 \downarrow & \downarrow &  \\
 0\rar \tilde{\cK} & \rar l^*\cF\oplus l'^*\cG \rar &  \gamma_*(\Omega_{\cP/M'}\oplus \Omega_{\cP/M})    \\
 \downarrow f & \downarrow & \downarrow \\
 \rar \tilde{\cK}\otimes \gamma_*\cO_{\cU} & \rar (l^*\cF \oplus l'^*\cF)\otimes \gamma_*\cO_{\cU} \rar &  \gamma_*(\Omega_{\cP/M'}\oplus \Omega_{\cP/M})\otimes \gamma_*\cO_\cU  \\
  \downarrow & \downarrow & \downarrow \\
  0 & 0 & 0. \\
  \end{eqnarray*}     
The columns are exact and rows are left exact. Here $\cT:= \tilde{\cK}. \cI_{\gamma(\cU)}$, where $\cI_{\gamma(\cU)}$ denotes the ideal sheaf of the subset 
$\gamma(\cU)$ in $M\times q'(\cU)$.  In particular we have the equalities:
 $$
 rank(\cT)=rank(\tilde{\cK})= rank(l^*\cF\oplus l'^*\cG).
 $$
\end{lemma}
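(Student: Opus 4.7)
The plan is to construct the diagram column by column using the short exact sequences obtained by tensoring with $\gamma_*\cO_\cU$. The underlying observation is that the natural surjection $\cO_{M\times M'} \twoheadrightarrow \gamma_*\cO_\cU$ has kernel $\cI_{\gamma(\cU)}$, so for any coherent $\cO_{M\times M'}$-module $\cE$ one has the short exact sequence
$$0 \to \cI_{\gamma(\cU)} \cdot \cE \to \cE \to \cE \otimes \gamma_*\cO_\cU \to 0.$$
Applying this to $\cE = \tilde{\cK}$ and to $\cE = l^*\cF \oplus l'^*\cG$ produces the two left columns once we set $\cT := \cI_{\gamma(\cU)} \cdot \tilde{\cK}$ and $\cB := \cI_{\gamma(\cU)} \cdot (l^*\cF \oplus l'^*\cG)$, which matches the description of $\cT$ in the lemma. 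The right column is essentially trivial: since $\gamma_*(\Omega_{\cP/M'} \oplus \Omega_{\cP/M})$ is already a $\gamma_*\cO_\cU$-module, its canonical map to the tensor product with $\gamma_*\cO_\cU$ is an isomorphism.

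Next I would install the horizontal arrows. The middle row is the given left-exact sequence \eqref{exeexact}. For the top row, the inclusion $j:\tilde{\cK} \hookrightarrow l^*\cF \oplus l'^*\cG$ is $\cO_{M\times M'}$-linear, so it carries $\cI_{\gamma(\cU)} \cdot \tilde{\cK}$ into $\cI_{\gamma(\cU)} \cdot (l^*\cF \oplus l'^*\cG)$; this defines an injection $\cT \hookrightarrow \cB$ that fits compatibly with the inclusions into the middle row. The bottom row is then obtained by applying $-\otimes \gamma_*\cO_\cU$ to \eqref{exeexact}, and commutativity of the two horizontal squares is a direct consequence of the naturality of tensor product. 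Exactness of the columns is automatic from the construction, and left-exactness of the top row follows from the injectivity of $j$.

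For the rank equalities, the essential input is a dimension count: as observed right before Lemma \ref{sheafiden}, the image $\gamma(\cU)$ has dimension $N' < N + N' = \dim(M \times q'(\cU))$, so $\gamma_*\cO_\cU$ is supported on a proper closed subvariety of positive codimension. Consequently $\tilde{\cK} \otimes \gamma_*\cO_\cU$ is a torsion sheaf and has rank zero, which together with exactness of the first column forces $\text{rank}(\cT) = \text{rank}(\tilde{\cK})$. Combined with \eqref{rankKtilde} this yields $\text{rank}(\cT) = \text{rank}(\tilde{\cK}) = \text{rank}(l^*\cF \oplus l'^*\cG)$, as desired. The only subtle point is to verify that $\cT$, defined intrinsically as $\cI_{\gamma(\cU)} \cdot \tilde{\cK}$, really is the kernel of $\tilde{\cK} \to \tilde{\cK} \otimes \gamma_*\cO_\cU$ appearing in the leftmost column; this is immediate once one recognizes $\tilde{\cK} \otimes \gamma_*\cO_\cU = \tilde{\cK}/(\cI_{\gamma(\cU)} \cdot \tilde{\cK})$, and poses no real obstacle.
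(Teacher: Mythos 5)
Your proposal is correct and follows essentially the same route as the paper: the paper's (very terse) proof likewise obtains the diagram by restricting the sequence \eqref{exeexact} to the image $\gamma(\cU)$, with the columns being the standard short exact sequences $0\rar \cI_{\gamma(\cU)}\cdot\cE\rar\cE\rar\cE\otimes\gamma_*\cO_\cU\rar 0$, and derives the rank equalities from the fact that $\tilde{\cK}\otimes\gamma_*\cO_\cU$ is supported on $\gamma(\cU)$, which has strictly smaller dimension. You have simply filled in the details (identification of $\cT$ with the kernel of $f$, functoriality of the horizontal maps, the torsion argument) that the paper leaves implicit.
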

\begin{proof}
This commutative diagram follows from restriction of \eqref{exeexact} on the image $\gamma(\cU)$ in $M\times q'(\cU)$.
The rank equality is clear, using \eqref{rankKtilde}, since $\tilde{\cK}\otimes \gamma_*\cO_{\cU}$ is supported on $\gamma(\cU)$, whose dimension is strictly smaller than $M\times M'$.
\end{proof}

%Consider the exact sequences on $\cU$:
%$$
%0\rar q^*\Omega_{M} \rar \Omega_{\cP} \rar \Omega_{\cP/M}\rar 0
%$$
%and
%$$
%0\rar q'^*\Omega_{M'} \rar \Omega_{\cP} \rar \Omega_{\cP/M'}\rar 0.
%$$

Since $q=l \circ \gamma$ and $q'= l'\circ \gamma$, pullback of the left exact sequence \eqref{exeexact}, on $\cU$, via $\gamma$ gives the  exact sequence of sheaves:
\begin{equation}\label{exeexactU}
0\rar T\rar \gamma^*\tilde{\cK} \sta{\alpha}{\rar} q^*\cF \oplus q'^*\cG \rar q^*\cF+q'^*\cG \rar 0.
\end{equation}
The right most term is just  projecting the direct sum into $\gamma^*\gamma_*(\Omega_{\cP/M'}\oplus \Omega_{\cP/M})$. The following map
$$ 
\gamma ^*\gamma_*(\Omega_{\cP/M'}\oplus \Omega_{\cP/M}) \rar \Omega_{\cP/M'}\oplus \Omega_{\cP/M},
$$ 
is generically injective on $\cU$. This is because if we restrict $\gamma$ on the open subset $\cU"\subset \cU\subset \cP$ (see proof of Lemma \ref{choiceU} for definition of $\cU"$ with codim$(\cP-\cU")\geq 2$) the above map of sheaves is an isomorphism on $\cU"$.

Hence we obtain a generically injective map on $\cU$ (and injective on $\cU"$): 
\begin{equation}\label{finalsubsheaf}
q^*\cF+ q'^*\cG \rar \Omega_{\cP/M'}\oplus \Omega_{\cP/M}.
\end{equation}
The ranks of these sheaves will  be crucial in the rank estimates in the final section.

Here $T:=ker(\alpha)$. 

\begin{lemma}\label{detT}
There is a subsheaf $\tilde{\cT}\subset \tilde{\cK}$ on $M\times q'(\cU)$, such that 
$$
det(\gamma^*(\tilde{\cT}))\,=\,det(T) 
$$ 
and 
$$ 
rank({\cT})\,=\,rank(\tilde{\cT})\,=\,rank (l^*\cF\oplus l'^*\cG).
$$
\end{lemma}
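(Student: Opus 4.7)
I would define $\tilde{\cT}$ explicitly as
$$
\tilde{\cT}\,:=\,\ker\bigl(\tilde{\cK}\hookrightarrow l^*\cF\oplus l'^*\cG \thrar (l^*\cF\oplus l'^*\cG)/\cI_{\gamma(\cU)}(l^*\cF\oplus l'^*\cG)\bigr),
$$
i.e.\ as the intersection $\tilde{\cT}=\tilde{\cK}\cap \cI_{\gamma(\cU)}(l^*\cF\oplus l'^*\cG)$ taken inside $l^*\cF\oplus l'^*\cG$. This is clearly a subsheaf of $\tilde{\cK}$. Since the target of the defining map is identified with $\gamma_*(q^*\cF\oplus q'^*\cG)$ via $\gamma^*\gamma_*=\m{id}$ on sheaves supported on $\gamma(\cU)$, the quotient $\tilde{\cK}/\tilde{\cT}$ embeds into $\gamma_*(q^*\cF\oplus q'^*\cG)$ by the second isomorphism theorem, and is therefore annihilated by $\cI_{\gamma(\cU)}$; write $\tilde{\cK}/\tilde{\cT}=\gamma_*\cQ$ for some coherent sheaf $\cQ$ on $\cU$, with $\cQ\hookrightarrow q^*\cF\oplus q'^*\cG$.

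The rank statements are essentially free. By construction $\cT=\cI_{\gamma(\cU)}\tilde{\cK}$ is contained in both $\tilde{\cK}$ and $\cI_{\gamma(\cU)}(l^*\cF\oplus l'^*\cG)$, hence $\cT\subset \tilde{\cT}\subset \tilde{\cK}$. Lemma \ref{inversef} already provides $\m{rank}(\cT)=\m{rank}(\tilde{\cK})=\m{rank}(l^*\cF\oplus l'^*\cG)$, and the sandwich immediately forces $\m{rank}(\tilde{\cT})$ to be the common value. For the identification $\gamma^*\tilde{\cT}=T$ (read as an equality of subsheaves of $\gamma^*\tilde{\cK}$), I would apply $\gamma^*$ to $0\to \tilde{\cT}\to \tilde{\cK}\to \gamma_*\cQ\to 0$; right exactness yields
$$
\gamma^*\tilde{\cT}\,\lrar\,\gamma^*\tilde{\cK}\,\lrar\,\cQ\,\lrar\,0,
$$
and the composite $\gamma^*\tilde{\cK}\to \cQ\hookrightarrow q^*\cF\oplus q'^*\cG$ is, by unwinding the definitions, precisely the map $\alpha$ of \eqref{exeexactU}. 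Therefore $\ker\alpha=\ker(\gamma^*\tilde{\cK}\to \cQ)=\m{Image}(\gamma^*\tilde{\cT}\to \gamma^*\tilde{\cK})$, which is exactly the desired equality $\gamma^*\tilde{\cT}=T$.

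The main obstacle I expect is interpretive rather than computational: the pullback $\gamma^*\tilde{\cT}\to \gamma^*\tilde{\cK}$ need not be injective (the obstruction is a $\m{Tor}^1$ contribution because $\gamma^*$ is only right exact), so $\gamma^*\tilde{\cT}$ equals $T$ only as a subsheaf of $\gamma^*\tilde{\cK}$, not necessarily abstractly. The whole argument hinges on choosing $\tilde{\cT}$ so that $\tilde{\cK}/\tilde{\cT}$ is scheme-theoretically supported on $\gamma(\cU)$, which is precisely what the $\cI_{\gamma(\cU)}$-adic intersection ensures and which lets me transport the injection $\tilde{\cK}/\tilde{\cT}\hookrightarrow \gamma_*(q^*\cF\oplus q'^*\cG)$ to an injection $\cQ\hookrightarrow q^*\cF\oplus q'^*\cG$ on $\cU$ without losing information.
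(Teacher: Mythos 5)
Your construction is the same as the paper's: your $\tilde{\cT}=\tilde{\cK}\cap \cI_{\gamma(\cU)}(l^*\cF\oplus l'^*\cG)$ is exactly the preimage $f^{-1}(\gamma_*T)$ under the map $f:\tilde{\cK}\rar\tilde{\cK}\otimes\gamma_*\cO_\cU$ used in Lemma \ref{inversef} (the defining map factors through $f$, and $\gamma_*T$ is the kernel of $\tilde{\cK}\otimes\gamma_*\cO_\cU\rar(l^*\cF\oplus l'^*\cG)\otimes\gamma_*\cO_\cU$), and the rank sandwich and the right-exactness argument for $\gamma^*\tilde{\cT}=T$ match the paper's proof. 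Your closing caveat that the equality should be read as an identification of the image of $\gamma^*\tilde{\cT}$ inside $\gamma^*\tilde{\cK}$ with $\ker\alpha$ is a reasonable and slightly more careful gloss on the paper's assertion that the map $\gamma^*\cT\rar\gamma^*\tilde{\cT}$ vanishes.
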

\begin{proof}
Pushforward of \eqref{exeexactU} on $M\times q'(\cU)$ gives the exact sequence:
$$
0 \rar \gamma_*(T)\rar \tilde{\cK} \otimes \gamma_*(\cO_\cU) \rar (l^*\cF \oplus l'^*\cG)\otimes \gamma_*(\cO_{\cU})\rar.
$$
In the commutative diagram in Lemma \ref{inversef}, we take the inverse image $\tilde{\cT}:=f^{-1}(\gamma_*T)$ of $\gamma_*T$ under the map $f$, which gives a short exact sequence, compatible with the maps in Lemma \ref{inversef}:
$$
0\rar {\cT}\rar  \tilde{\cT} \rar \gamma_*(T) \rar 0.
$$
Since $\gamma_*(T)$ is supported on $\gamma(\cU)$, we get the rank equality:
$$
rank(\tilde{\cT})=rank(\cT).
$$
Pullback via $\gamma$ on $\cU$ gives the exact sequence:
$$
\gamma^*{\cT}\rar \gamma^*(\tilde{\cT})\rar \gamma^*\gamma_*(T) \rar 0.
$$
However the left hand map is zero. Furthermore, restricting $\gamma$ on the open subset $\cU"\subset \cU$, (for $\cU"$ as in  Lemma \ref{choiceU}), we note that $\gamma^*\gamma_*T=T$ on $\cU"$.  
Hence we get $\gamma^*(\tilde{\cT})= T$ on $\cU"$. Since codim$(\cP-\cU")\geq 2$, we get the equality of determinants \cite[p.10]{Huybrechts}:
$$
det(\gamma^*(\tilde{\cT}))\,=\,det(T) 
$$ 
on $\cP$.
\end{proof}

Here $\cK:= image(\gamma^*\tilde{\cK}\rar q^*\cF\oplus q'^*\cG)$. Then  we get the short exact sequences on $\cU$ (by breaking up \eqref{exeexactU}):
\begin{equation}\label{shortT}
0\rar T \rar \gamma^*\tilde{\cK} \rar \cK\rar 0
\end{equation}
and
\begin{equation}\label{shortK}
0\rar \cK \rar q^*\cF \oplus q'^*\cG \rar q^*\cF+q'^*\cG \rar 0.
\end{equation}

\begin{remark}
 We note that it is relevant to look at the exact sequence \eqref{exeexact} on the product $M\times q'(\cU)$ and then restrict on $\cU$, instead of taking sums on $\cU$ as in \eqref{shortK}.
This is essential to deduce the triviality of $det(\cK)$, in Corollary \ref{trivK}.
\end{remark}

In the following lemma we note the determinants, (see \cite{Mumford}, for a definition and functorial properties of the functor $det$).

\begin{lemma}\label{detTK}
We have
$$
a) \,\,det(\cT)=det(\tilde\cK),
$$
$$
b) \,\,det(\cT)=det(\tilde{\cT}),
$$
$$
c) \,\,det(T)\otimes det(\cK)= q^*L^{-1}\otimes q'^*L^{-1}
$$
and 
$$
d)\,\,\gamma^*det(\tilde\cT)=det(T).
$$
\end{lemma}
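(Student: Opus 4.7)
The plan is to use the functoriality of the Mumford determinant functor together with a dimension count showing that $\gamma(\cU)\subset M\times q'(\cU)$ has codimension at least two. Indeed, since $\gamma$ is injective on $\cU$ by Lemma \ref{sheafsurjective}, we have $\dim \gamma(\cU)=\dim\cU=N+m$ where $m=h(r-h)=\dim G(h,r)$, while $\dim(M\times q'(\cU))=2N$. For $r\geq 3$, $g\geq 3$ and $0<h<r$ we have
$$N-m=(r^2-1)(g-1)-h(r-h)\geq 2,$$
so any torsion sheaf on $M\times q'(\cU)$ supported on $\gamma(\cU)$ has trivial Mumford determinant. This is the single observation that drives all four equalities.

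For (a), read off the left column of the commutative diagram in Lemma \ref{inversef} as the short exact sequence
$$0\to \cT\to \tilde{\cK}\to \tilde{\cK}\otimes \gamma_*\cO_\cU\to 0.$$
The quotient is supported on $\gamma(\cU)$, hence has trivial determinant, giving $\det(\cT)=\det(\tilde{\cK})$. The same observation applied to the short exact sequence
$$0\to \cT\to \tilde{\cT}\to \gamma_*T\to 0$$
produced in the proof of Lemma \ref{detT} yields $\det(\cT)=\det(\tilde{\cT})$, which is (b).

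For (c), split the four-term sequence \eqref{exeexactU} as the two short exact sequences \eqref{shortT} and \eqref{shortK} and take determinants. From \eqref{shortK} and \eqref{detL},
$$\det(\cK)\otimes \det(q^*\cF+q'^*\cG)=q^*\det(\cF)\otimes q'^*\det(\cG)=q^*L^{-1}\otimes q'^*L'^{-1}.$$
On the other hand, $\tilde{\cK}$ sits inside $l^*\cF\oplus l'^*\cG$ with cokernel supported on $\gamma(\cU)$ (by \eqref{exeexact}), so $\det(\tilde{\cK})=l^*\det(\cF)\otimes l'^*\det(\cG)=l^*L^{-1}\otimes l'^*L'^{-1}$; pulling back by $\gamma$ and using $q=l\circ\gamma$, $q'=l'\circ\gamma$ gives $\gamma^*\det(\tilde{\cK})=q^*L^{-1}\otimes q'^*L'^{-1}$. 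Taking determinants in \eqref{shortT} then yields
$$\det(T)\otimes \det(\cK)=\det(\gamma^*\tilde{\cK})=q^*L^{-1}\otimes q'^*L'^{-1},$$
establishing (c). Part (d) is then immediate: by Lemma \ref{detT} we have $\gamma^*\tilde{\cT}=T$, and $\det$ commutes with pullback, so $\gamma^*\det(\tilde{\cT})=\det(\gamma^*\tilde{\cT})=\det(T)$.

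The main technical nuisance in this argument is that $\tilde{\cK}$, $\cT$, $\tilde{\cT}$ are generally not locally free, so one must invoke the Mumford determinant rather than the top exterior power. The essential input there is that for a coherent sheaf supported in codimension $\geq 2$ on a normal variety, the Mumford $\det$ is trivial — which is precisely why the codimension bound $N-m\geq 2$ is what makes the whole argument go through.
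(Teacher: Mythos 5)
Your proof is correct and follows essentially the same route as the paper: each equality is obtained by taking determinants in the relevant short exact sequence (the left column of Lemma \ref{inversef}, the sequence from Lemma \ref{detT}, \eqref{exeexact}, and \eqref{shortT}) and observing that the torsion quotients are supported on $\gamma(\cU)$, which has codimension at least two, hence have trivial determinant. Your only addition is making the codimension bound $N-m\geq 2$ explicit via a dimension count, which the paper asserts without computation; that is a welcome but not substantively different step.
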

\begin{proof}
Using \eqref{exeexact}, we have on $M\times q'(U)$:
$$
det(\tilde{\cK})\otimes det(l^*\cF+l'^*\cG) = det (l^*\cF\oplus l'^*\cG)= l^*L^{-1}\otimes l'^*L'^{-1}.
$$
Since $Image(\eta)$, $\gamma_*(T)$ and $\tilde{\cK}\otimes \gamma_*(\cO_\cU)$ are supported on $\gamma(\cU)$ whose codimension is at least two, their determinants are trivial, i.e equal to $\cO_{M\times q'(\cU)}$ \cite[p.10]{Huybrechts}.
Similarly, we note that 
$$
det(\cT)=det(\tilde{\cK})
$$
and
$$
det(\cT)=det(\tilde{\cT}).
$$
Putting them together, using \eqref{shortT} and $det(\gamma^*\tilde{\cK})= \gamma^*(det(\tilde{\cK}))$, we get 
$$
det(T)\otimes det(\cK)= q^*L^{-1}\otimes q'^*L^{-1}.
$$
By Lemma \ref{detT},   we obtain $\gamma^*det(\tilde\cT)=det(T)$.
\end{proof}

Hence, we deduce:
\begin{corollary}\label{trivK}
$$
det(\cK)= \cO.
$$
\end{corollary}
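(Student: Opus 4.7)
The plan is to extract the identity $\det(\cK)=\cO$ by combining all four parts of Lemma~\ref{detTK} with a direct computation of $\det(\tilde{\cK})$ from the defining sequence \eqref{exeexact}.

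First, I would compute $\det(\tilde{\cK})$ on $M\times q'(\cU)$. From the left exact sequence
\[
0\rar \tilde{\cK} \rar l^*\cF \oplus l'^*\cG \sta{\eta}{\rar} \gamma_*(\Omega_{\cP/M'}\oplus \Omega_{\cP/M}),
\]
the image of $\eta$ is supported on $\gamma(\cU)$, whose dimension is strictly smaller than $M\times M'$ (by the dimension count following Lemma~\ref{sheafsurjective}); in particular its support has codimension $\geq 2$ inside $M\times q'(\cU)$, so its determinant is trivial by \cite[p.10]{Huybrechts}. Taking determinants in the exact sequence therefore yields
\[
\det(\tilde{\cK}) \;=\; \det(l^*\cF)\otimes \det(l'^*\cG) \;=\; l^*L^{-1}\otimes l'^*L'^{-1},
\]
using the standing hypothesis \eqref{detL} that $\det(\cF)=L^{-1}$ and $\det(\cG)=L'^{-1}$.

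Next, I would transfer this computation down to $\cU$. Pulling back along $\gamma$ and using $q=l\circ\gamma$, $q'=l'\circ\gamma$ gives
\[
\gamma^*\det(\tilde{\cK}) \;=\; q^*L^{-1}\otimes q'^*L'^{-1}.
\]
By Lemma~\ref{detTK}(a,b), $\det(\tilde{\cK})=\det(\cT)=\det(\tilde{\cT})$, and by Lemma~\ref{detTK}(d), $\gamma^*\det(\tilde{\cT})=\det(T)$. Chaining these equalities,
\[
\det(T) \;=\; \gamma^*\det(\tilde{\cT}) \;=\; \gamma^*\det(\tilde{\cK}) \;=\; q^*L^{-1}\otimes q'^*L'^{-1}.
\]

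Finally, Lemma~\ref{detTK}(c) gives $\det(T)\otimes\det(\cK)=q^*L^{-1}\otimes q'^*L'^{-1}$. Substituting the expression for $\det(T)$ just obtained and cancelling, we conclude $\det(\cK)=\cO_{\cU}$, as asserted. The argument is essentially a bookkeeping exercise on top of Lemma~\ref{detTK}; the only genuinely substantive input is the observation that $\mathrm{Image}(\eta)$ is supported on a codimension $\geq 2$ subset of $M\times q'(\cU)$, which is precisely why the ``torsion'' contributions in the determinant formula drop out. No further obstacle should arise.
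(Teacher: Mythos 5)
Your argument is correct and takes essentially the same route as the paper: the paper deduces $\det(\cK)=\cO$ by combining \eqref{shortT} with parts (a), (b), (d) of Lemma \ref{detTK} (which give $\gamma^*\det(\tilde{\cK})=\det(T)$), while you reach the identical cancellation by explicitly evaluating both $\det(T)$ and $\det(T)\otimes\det(\cK)$ as $q^*L^{-1}\otimes q'^*L'^{-1}$ via part (c). The one substantive input in both versions is the same, namely that sheaves supported on the codimension $\geq 2$ locus $\gamma(\cU)\subset M\times q'(\cU)$ have trivial determinant.
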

\begin{proof}
Using \eqref{shortT} and, a) and d) of Lemma \ref{detTK}, we deduce that $det(\cK)=\cO$.
 
\end{proof}
%%%%%%%%%%%%%%%%%%%%%%%%%%%%%%%%%%%%%%%%%%%%%%%%%%%%%%%%%%%%%%%%%%%%%%%%%%%%%%%%%%%%%%%%%%%%%%%%%%%
\subsection{Proof of main theorem}

Recall the short exact sequence \eqref{shortK} on $\cU$:
$$
0\rar \cK \rar q^*\cF \oplus q'^*\cG \rar q^*\cF+q'^*\cG \rar 0.
$$

Let $s:= \m{rank}({\cK})$ and $s':=\m{rank}(q^*\cF + q'^*\cG)$ on $\cU$. (Note that these ranks may be different from the ranks on $M\times M'$).

Our aim is now to show that the short exact sequence \eqref{shortK} does not exist on $\cU$. To show this, we first  show that rank$(\cK)>0$. Then we consider the non-zero section induced by taking appropriate determinants. We then check that the section lies in a Hodge cohomology twisted by powers of $L$ and $L'$. Then we will note that these groups are zero, using rational connectedness and Kodair-Akizuki-Nakano theorem.
The relevant cohomology groups are detailed below, which depend on the powers of $L$ and $L'$. When $rank(\cK)=0$, we make a rank estimate of the direct sum with respect to the target sheaf, to obtain a contradiction.
 
 Before analysing the induced sections, we note the following lemma, which we will need.
 
 \begin{lemma}\label{incluK}
 Consider the subsheaf $\cK\subset q^*\cF \oplus q'^*\cG$, from \eqref{shortK}. Then there is a short exact sequence of torsion free sheaves on $\cU$: 
 $$
 0\rar K \rar \cK \rar K' \rar 0,
 $$
 such that the sheaves $K$ and $K'$ admit generically injective maps into $q^*\Omega_M\cap q'^*\Omega_{M'} \subset \Omega_\cU$.
\end{lemma}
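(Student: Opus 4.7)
The plan is to exploit a block-diagonal structure in the defining map of $\cK$. From \eqref{shortK}, $\cK$ is the kernel of the composition
$$\psi\colon q^*\cF \oplus q'^*\cG \hookrightarrow q^*\Omega_M \oplus q'^*\Omega_{M'} \longrightarrow \Omega_{\cP/M'} \oplus \Omega_{\cP/M},$$
where the second arrow is the (generically) surjective map from Lemma \ref{sheafiden}.

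The first step is to show that $\psi$ is block-diagonal, i.e.\ $\psi = a \oplus b$ with $a\colon q^*\cF \rar \Omega_{\cP/M'}$ and $b\colon q'^*\cG \rar \Omega_{\cP/M}$. This is a direct consequence of the standard cotangent sequences on $\cU$: the sequence $0 \rar q^*\Omega_M \rar \Omega_\cU \rar \Omega_{\cP/M} \rar 0$ (dual of \eqref{tanbundle}) shows that the composition $q^*\Omega_M \hookrightarrow \Omega_\cU \twoheadrightarrow \Omega_{\cP/M}$ vanishes, and by symmetry $q'^*\Omega_{M'} \hookrightarrow \Omega_\cU \twoheadrightarrow \Omega_{\cP/M'}$ also vanishes. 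Tracing through the construction of the map in Lemma \ref{sheafiden} as the composition of the inclusions with the quotient map to $\Omega_{\cP/M'} \oplus \Omega_{\cP/M}$, this forces the off-diagonal components of $\psi$ to be zero.

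Once block-diagonality is established, it follows formally that $\cK = \ker(a) \oplus \ker(b)$. I would then set $K := \ker(a) \subset q^*\cF$ and $K' := \ker(b) \subset q'^*\cG$ and take the tautological split short exact sequence $0 \rar K \rar \cK \rar K' \rar 0$. To complete the proof, I would identify each kernel as an intersection in $\Omega_\cU$: using the dual cotangent sequence for the fibration $q'$, namely $0 \rar q'^*\Omega_{M'} \rar \Omega_\cU \rar \Omega_{\cP/M'} \rar 0$, we have $K = q^*\cF \cap q'^*\Omega_{M'}$ inside $\Omega_\cU$, and since $q^*\cF \subset q^*\Omega_M$ this gives $K \subset q^*\Omega_M \cap q'^*\Omega_{M'}$. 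Symmetrically, $K' = q'^*\cG \cap q^*\Omega_M \subset q^*\Omega_M \cap q'^*\Omega_{M'}$.

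The main (mild) obstacle is simply bookkeeping with the various inclusions inside $\Omega_\cU$ and being careful that the map from Lemma \ref{sheafiden} is only generically surjective; the vanishing of the off-diagonal compositions is however a statement on all of $\cU$, so the kernel computation is valid globally on $\cU$. The rest is formal.
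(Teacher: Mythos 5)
Your proof is correct, but it takes a genuinely different route from the paper's. The paper does not use block-diagonality at all: it builds a three-row commutative diagram in which $K$ is defined as the kernel of the \emph{sum} map $q^*\cF\oplus q'^*\cG\rar (q^*\cF+q'^*\cG)_\cU\subset\Omega_\cU$ (so $K$ is generically the antidiagonal copy of $q^*\cF\cap q'^*\cG$), and $K'$ is defined as $(q^*\cF+q'^*\cG)_\cU\cap(q^*\Omega_M\cap q'^*\Omega_{M'})$; the short exact sequence then comes from the exactness of the right-hand column $0\rar q^*\Omega_M\cap q'^*\Omega_{M'}\rar\Omega_\cU\rar\Omega_{\cP/M'}\oplus\Omega_{\cP/M}$. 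Your observation that $\psi$ is block-diagonal (because $q^*\Omega_M\hookrightarrow\Omega_\cU\rar\Omega_{\cP/M}$ and $q'^*\Omega_{M'}\hookrightarrow\Omega_\cU\rar\Omega_{\cP/M'}$ both vanish, and the map of Lemma \ref{sheafiden} is the sum map into $\Omega_\cU$ followed by the two projections) is valid, and it yields the sharper conclusion that $\cK$ actually \emph{splits} as $(q^*\cF\cap q'^*\Omega_{M'})\oplus(q'^*\cG\cap q^*\Omega_M)$, with both summands visibly inside $q^*\Omega_M\cap q'^*\Omega_{M'}$; one can check your $\cK$ agrees with the paper's filtration of the same sheaf. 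What your version buys is an explicit, split identification of the two pieces, which still supports everything the lemma is used for later (the generic injections \eqref{incl0}--\eqref{incl1} and $\det\cK=\det K\otimes\det K'$); what the paper's version buys is that it keeps the sum sheaf $(q^*\cF+q'^*\cG)_\cU\subset\Omega_\cU$ in play, consistent with the surrounding constructions on $M\times q'(\cU)$. Two small caveats, both at the same level of rigor as the paper itself: the relative cotangent sequence for $q'$ is only exact where $q'$ is a smooth morphism (e.g.\ on $\cU''$), and $\cK$ in \eqref{shortK} is strictly the kernel of the map into $\gamma^*\gamma_*(\Omega_{\cP/M'}\oplus\Omega_{\cP/M})$, which coincides with $\ker\psi$ only generically; neither affects the conclusion as stated.
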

\begin{proof} Since $q^*\cF$ and $q'^*\cG$ are torsion free sheaves, we note that the subsheaf $\cK\subset q^*\cF\oplus q'^*\cG$ is also torsion free. Now
we use the generically injective map \eqref{finalsubsheaf} on $\cU$:
$$
q^*\cF+ q'^*\cG \rar \Omega_{\cP/M'}\oplus \Omega_{\cP/M}.
$$
and realize it as a subsheaf of $\Omega_{\cP/M'}\oplus \Omega_{\cP/M}$ on $\cU"\subset \cU$. Here $\cU"\subset \cU$ is defined in  Lemma \ref{choiceU}, where the above map is actually injective and hence $q^*\cF+q'^*\cG$ is also torsion free on $\cU"$.

Denote $(q^*\cF+ q'^*\cG)_{\cU"}$ the sum of $q^*\cF$ and $ q'^*\cG$ taken inside $\Omega_{\cU"}$.

Consider the commutative diagram on $\cU"$:
\begin{eqnarray*}
& & 0 \,\, \,\,\,\,\, \,\,\,\,\,\,\,\,\,\,\,\,\,\,\,\,\,\,\,\,\,\,\,\,\,\,\,\,\,\,\,\,\,\,\,\,\,\,\,\,\,\,\,\,\,\,\,\,\,\,\,\,\,\,\,\,\,\,\,\,\,\,\,\,\,\,\,\,\,\,\,\,\,\,\,\,\,\,\,\,\,\,\,\,\,\,\,\,0 \\
   \,\,\,\,\,\,\,\,\,        &   &   \downarrow  \,\,\,\,\, \,\,\,\,\,\,\,\,\,\,\,\,\,\,\,\,\,\,\,\,\,\,\,\,\,\,\,\,\,\,\,\,\,\,\,\,\,\,\,\,\,\,\,\,\,\,\,\,\,\,\,\,\,\,\,\,\,\,\,\,\,\,\,\,\,\,\,\,\,\,\,\,\,\,\,\,\,\,\,\,\,\,\,\,\,\,\,\,\,\,\downarrow \\
0\rar K_{\cU"} \rar & \,\,\,\,\,\,\,\,\,\,\,\,\cK \,\,\,\,\,\,\,\,\,\rar & (q^*\cF + q'^*\cG)_{\cU"}\cap (q^*\Omega_M \cap q'^*\Omega_{M'}) \subset \,\, q^*\Omega_M \cap q'^*\Omega_{M'} \\
 ||  \,\,\,\,\,\,\,\,\,        & \downarrow &   \downarrow  \,\,\,\,\, \,\,\,\,\,\,\,\,\,\,\,\,\,\,\,\,\,\,\,\,\,\,\,\,\,\,\,\,\,\,\,\,\,\,\,\,\,\,\,\,\,\,\,\,\,\,\,\,\,\,\,\,\,\,\,\,\,\,\,\,\,\,\,\,\,\,\,\,\,\,\,\,\,\,\,\,\,\,\,\,\,\,\,\,\,\,\,\,\,\,\downarrow \\
0\rar K_{\cU"} \rar & q^* \cF \oplus q'^* \cG  \sta{t}{\rar} & (q^*\cF + q'^* \cG)_{\cU"} \,\,\,\,\,\,\,\subset \,\,\,\,\,\,\, \,\,\,\,\,\,\,\,\,\,\,\,\,\,\,\,\,\,\,\,\,\,\,\,\,\,\,\,\,\,\,\,\,\,\,\,\Omega_{\cU"} \\
\downarrow \,\,\,\,\,\,\,\,\,   & ||                       & \downarrow  \,\,\,\,\,\,\,\, \,\,\,\,\,\,\,\,\,\,\,\,\,\,\,\,\,\,\,\,\,\,\,\,\,\,\,\,\,\,\,\,\,\,\,\,\,\,\,\,\,\,\,\,\,\,\,\,\,\,\,\,\,\,\,\,\,\,\,\,\,\,\,\,\,\,\,\,\,\,\,\,\,\,\,\,\,\,\,\,\,\,\,\,\,\,\,\,\downarrow \\
0\rar \cK \rar & q^*\cF \oplus q'^*\cG \rar &  q^*\cF + q'^* \cG \,\,\,\,\,\,\,\,\,\,\,\,\,\,\,\,\subset \,\,\,\,\,\,\,\,\,\,\,\,\,\,\,\,\,\,\,\,\,\,\,\,\,\,\,\,\,\,\,\,\,\,\,\,\Omega_{\cP/M'} \oplus \Omega_{\cP/M} \\
 &  &  \downarrow \,\,\,\,\,\,\,\,\, \,\,\,\,\,\,\,\,\,\,\,\,\,\,\,\,\,\,\,\,\,\,\,\,\,\,\,\,\,\,\,\,\,\,\,\,\,\,\,\,\,\,\,\,\,\,\,\,\,\,\,\,\,\,\,\,\,\,\,\,\,\,\,\,\,\,\,\,\,\,\,\,\,\,\,\,\,\,\,\,\,\,\,\,\,\,\,\downarrow \\
 & &   0 \,\, \,\,\,\,\, \,\,\,\,\,\,\,\,\,\,\,\,\,\,\,\,\,\,\,\,\,\,\,\,\,\,\,\,\,\,\,\,\,\,\,\,\,\,\,\,\,\,\,\,\,\,\,\,\,\,\,\,\,\,\,\,\,\,\,\,\,\,\,\,\,\,\,\,\,\,\,\,\,\,\,\,\,\,\,\,\,\,\,\,\,\,\,\,\,\,0. \\ 
 \end{eqnarray*}
 The two columns on the right are exact sequences, and the bottom row is the same as \eqref{shortK}.
 The sheaf $K_{\cU"}$ is the kernel of the morphism $t$, on $\cU"$.
  
 Since the sheaf $K_{\cU"}$ is generically $q^*\cF\cap q'^*\cG$ (inside $q^*\Omega_M\cap q'^*\Omega_{M'}$), 
 there is a generically injective map
  $$
 K_{\cU"}\hookrightarrow  q^*\Omega_M\cap q'^*\Omega_{M'}.
 $$
Denote
$$
K'_{\cU"}:= (q^*\cF + q'^*\cG)_{\cU"}\cap (q^*\Omega_M \cap q'^*\Omega_{M'}) \,\subset \, q^*\Omega_M \cap q'^*\Omega_{M'}
$$ 
on $\cU"$.
However, since $\cK$ is a torsion free sheaf on $\cU$,  $K_{\cU"}$ extends as a torsion free subsheaf $K$, of $\cK$ on $\cU$.
This gives the exact sequence
$$
0\rar K \rar \cK \rar K'\rar 0
$$
as claimed, where $K'$ restricts to $K'_{\cU"}$ on $\cU"$. We can assume that $K'$ is also torsion free (after replacing $K'$ by $\f{K'}{{torsion}}$, and taking $K$ to be the kernel of the projection $\cK\rar \f{K'}{{torsion}}$. The generic injectivity statement still holds since on $\cU"$ there is no torsion).
 \end{proof}

We first note the following.
\begin{lemma}
The rank of the sheaf $\cK$ is non-zero.
\end{lemma}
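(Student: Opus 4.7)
The plan is a direct rank count using the short exact sequence \eqref{shortK} together with the generically injective map \eqref{finalsubsheaf}. First, I would note that $q^*\cF+q'^*\cG$ embeds generically into the locally free sheaf $\Omega_{\cP/M'}\oplus\Omega_{\cP/M}$; since the fibres of $q$ and $q'$ are Grassmannians of the common dimension $m=h(r-h)=\dim G(h,r)=\dim G(r-h,r)$, this sheaf has rank $2m$, so
$$
\m{rank}(q^*\cF+q'^*\cG)\,\leq\,2m.
$$

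By \eqref{detrank}, the destabilizers $\cF\subset\Omega_M$ and $\cG\subset\Omega_{M'}$ have ranks $p,p'\geq N/2$, so $\m{rank}(q^*\cF\oplus q'^*\cG)=p+p'\geq N$; hence from \eqref{shortK},
$$
\m{rank}(\cK)\,=\,(p+p')\,-\,\m{rank}(q^*\cF+q'^*\cG)\,\geq\,N-2m.
$$
It remains to check $N>2m$. Since $h(r-h)\leq r^2/4$ for $0<h<r$, we have $2m\leq r^2/2$, while $g\geq 3$ yields $N=(r^2-1)(g-1)\geq 2(r^2-1)$. The inequality $2(r^2-1)>r^2/2$ reduces to $3r^2>4$, which holds for every $r\geq 2$ and in particular for $r\geq 3$. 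Therefore $\m{rank}(\cK)\geq N-2m>0$, as required.

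No serious obstacle arises, since this is purely a rank comparison. The only point that warrants care is the identification of the target of \eqref{shortK} as a subsheaf of the rank-$2m$ bundle $\Omega_{\cP/M'}\oplus\Omega_{\cP/M}$, which has already been set up in the discussion leading to \eqref{finalsubsheaf}. Conceptually, the relative cotangent bundles of the two Grassmannian fibrations have small rank (at most $r^2/2$) compared with the base dimension $N\geq 2(r^2-1)$, so the destabilizing subsheaves of $\Omega_M$ and $\Omega_{M'}$ — which the slope constraint forces to have rank at least $N/2$ — cannot collectively fit inside them, and the excess rank must sit in $\cK$.
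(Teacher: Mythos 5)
Your proof is correct and takes essentially the same route as the paper, which assumes $\m{rank}(\cK)=0$ and derives a contradiction by comparing $N=(r^2-1)(g-1)$ with twice the fibre dimension of the Grassmannian bundles. Your version is in fact slightly cleaner and sharper: you use the correct bound $2m=2h(r-h)\leq r^2/2$, whereas the paper's displayed chain ends with the looser bound $2r^2$, against which the asserted failure of the inequality would not literally hold when $g=3$.
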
 
\begin{proof}
Suppose $rank (\cK)=0$.

In this case, we use the fact that $\cF$ and $\cG$ are torsion free sheaves and hence the sheaf $\cK=0$ on $\cU$.
Hence we have an inclusion of sheaves:
$$
q^*\cF \oplus q'^*\cG \hookrightarrow \Omega_{\cP/M'}\oplus \Omega_{\cP/M}.
$$
Now we check that this is not possible by computing the ranks
$$
(r^2-1)(g-1)=:N\leq rank(q^*\cF \oplus q'^*\cG ) \leq 2.dim(G) \leq r^2.
$$
This inequality does not hold if $r\geq 3,\,g\geq 3$ and we get a contradiction.

\end{proof}

 We now proceed as follows:

Using above lemma, we have $rank ({\cK})\neq 0$, and by Corollary \ref{trivK}, we know $det(\cK)=\cO$.

To compute the determinants of above sheaves, we use the fact that the Picard group of $\cP$ is generated by  $q^*\m{Pic}M$ and $\cO_{\cP}(1)$.
Note that this also holds outside a codimension two subset of $\cP$, in particular on $\cU$.

We can write 
$$
\m{det}(\cK)\,=\, \cO.
$$
  The above exact sequence \eqref{shortK}, and \eqref{detL} give
$$
\m{det}(q^*\cF + q'^*\cG)\,=\, q^*L^{-1}\otimes q'^*L'^{-1}.
$$  

Using the exact sequence in Lemma \ref{incluK}, we can write on $\cU$:
$$
det(\cK)=det(K).det(K').
$$
Let $det(K')\,=\,L^{a_1}\otimes L'^{b_1}$, then $det(K)=L^{-a_1}\otimes L'^{-b_1}$.

Using Lemma \ref{incluK},  on $\cU$, there are nonzero composed maps (which are generically injective): 

\begin{equation}\label{incl0}
K\hookrightarrow q^*\Omega_M\cap q'^*\Omega_{M'} \subset q^*\Omega_M,\,\,\,K'\hookrightarrow q^*\Omega_M\cap q'^*\Omega_{M'} \subset q^*\Omega_M,
\end{equation}
and
\begin{equation}\label{incl1}
K\hookrightarrow  q^*\Omega_M\cap q'^*\Omega_{M'} \subset q^*\Omega_{M'},\,\,\,K'\hookrightarrow  q^*\Omega_M\cap q'^*\Omega_{M'} \subset q^*\Omega_{M'}.
\end{equation}

\textbf{Case I:} Suppose that $rank(K)>0$ and $rank(K')>0$.

Hence taking determinants in \eqref{incl0}, we get nonzero morphisms
$$
q^*L^{-a_1}\otimes q'^*L'^{-b_1} \rar \bigwedge^sq^*\Omega_M,\,\, q^*L^{a_1}\otimes q'^*L'^{b_1} \rar \bigwedge^{s'}q^*\Omega_{M}.
$$
Here $s:= rank(K)$ (resply $s':=rank(K')$).
This give nonzero sections  in
$$
H^0(\cU, \bigwedge^t q^*\Omega_M\otimes q^*L^{-a'}\otimes q'^*L'^{-b'})
$$
when $(t,a',b'):= (s,-a_1, -b_1)$ and when $(t,a',b'):=(s',a_1,b_1)$.

Restricting on a generic fibre $G$ of $q$, we get a nonzero section in
\begin{equation}\label{zerocoh}
H^0(G,q'^*L'^{-b'}).
\end{equation}

If $b'\,>\, 0$, then the cohomology $H^0(G,q'^*L'^{-b'})$ is zero, since $q'^*L'$ on $G$ is $\cO_G(r)$ (see proof of \cite[Lemma 10.3]{Beauville}).
Hence the group in  \eqref{zerocoh} is zero.

Hence $b'\,\leq\,0$.

 Similarly, using \eqref{incl1}, if we consider  the fibration $q': \cP\rar M'$ we deduce that $a'\,\leq\,0$.
 
Hence 
$$
a',\,b'\leq 0. 
$$

Substituting the values of $(a',b')$, we deduce that 
$$
-a_1\leq 0,\, -b_1\leq 0,\, a_1\leq 0,\, b_1\leq 0.
$$
This implies that $a_1=b_1= 0$.

\textbf{Case II}: Suppose $rank(K)=0$ or $rank(K')=0$. Then since $\cF$ and $\cG$ are torsion free sheaves on $\cU$, the sheaf $K=0$ (resp $K'=0$), since they are torsion free (see Lemma \ref{incluK}) and hence 
\begin{equation}\label{ab}
a_1\,=\,b_1\,=\, 0. 
\end{equation}
Since $rank(\cK)\neq 0$, either $K$ or $K'$ has non zero rank.

Hence we get a non zero section in $H^0(\cU,q^*\Omega^t_M)$, for some $t=s>0$ or $t=s'>0$. Since $\cP-\cU\subset \cP$ is of codimension at least two, $M-q(\cU)\subset M$ is of codimension at least two (see proof of Lemma \ref{choiceU}), by  Lemma \ref{proformula}, the non zero section extends to give a non-zero section in $H^0(M, \Omega^t_M) $. But this group is zero using rational connectedness of $M$, since $t>0$.

This completes the proof.

%%%%%%%%%%%%%%%%%%%%%%%%%%%%%%%%%%%%%%%%%%%%%%%%%%%%%%%%%%%%%%%%%%%%

%%%%%%%%%%%%%%%%%%%%%%%%%%%%%%%%%%%%%%%%%%%%%%%%%%%%%%%%%%%%%%%%

%%%%%%%%%%%%%%%%%%%%%%%%%%%%%%%%%%%%%%%%%%%%%%%%%%%%%%%%%%%%

\end{document}